\numberwithin{equation}{section}
\theoremstyle{plain}   
\newtheorem{bigthm}{Theorem}   
\newtheorem{theorem}[equation]{Theorem}  
\newtheorem{cor}[equation]{Corollary}     
\newtheorem{lemma}[equation]{Lemma}         
\newtheorem{prop}[equation]{Proposition}
\theoremstyle{definition}
\theoremstyle{remark}
\newtheorem{remark}[equation]{Remark}
\newcommand{\Tor}{\operatorname{Tor}}
\newcommand{\Hom}{\operatorname{Hom}}
\newcommand{\TF}{\operatorname{TF}}
\newcommand{\TC}{\operatorname{TC}}
\newcommand{\TR}{\operatorname{TR}}
\newcommand{\THH}{\operatorname{THH}}
\newcommand{\HC}{\operatorname{HC}}
\newcommand{\Z}{\mathbb{Z}}
\newcommand{\Q}{\mathbb{Q}}
\newcommand{\Zp}{\mathbb{Z}_p}
\newcommand{\Fp}{\mathbb{F}_p}
\newcommand{\holim}{\operatornamewithlimits{holim}}
\newcommand{\id}{\operatorname{id}}
\newcommand{\pr}{\operatorname{pr}}
\newcommand{\tr}{\operatorname{tr}}
\newcommand{\xto}{\xrightarrow}
\begin{document}

\title{On relative and bi-relative algebraic $K$-theory \\
of rings of finite characteristic} 

\author{Thomas Geisser}

\address{University of Southern California, California, USA}

\email{geisser@usc.edu}

\author{Lars Hesselholt}

\address{Nagoya University, Nagoya, Japan}

\email{larsh@math.nagoya-u.ac.jp}

\thanks{The authors were supported in part by the National Science
 Foundation}

\maketitle

\section*{Introduction}

Throughout, we fix a prime number $p$ and consider unital
associative rings in which $p$ is nilpotent. It was proved by
Weibel~\cite[Cor.~5.3, Cor.~5.4]{weibel} long ago that, for such
rings, the relative  $K$-groups associated with a nilpotent extension
and the bi-relative $K$-groups associated with a Milnor square are
$p$-primary torsion groups. However, the question of whether these
groups can contain a $p$-divisible torsion subgroup has remained an
open and intractable problem. In this paper, we answer this question
in the negative. In effect, we prove the stronger statement that the
groups in question are always $p$-primary torsion groups of bounded
exponent.

In more detail, let $A$ be a unital associative ring and let $I
\subset A$ be a two-sided ideal. Then one defines the relative
$K$-theory spectrum $K(A,I)$ to be the mapping fiber of the map of
$K$-theory spectra $K(A) \to K(A/I)$ induced by the canonical
projection. Hence, there is a long-exact sequence of homotopy groups
$$\cdots \to K_q(A,I) \to K_q(A) \to K_q(A/I) \xto{\partial}
K_{q-1}(A,I) \to \cdots.$$
Here and throughout, $K(A)$ denotes the non-connective Bass completed
algebraic $K$-theory spectrum of the ring
$A$~\cite[Def.~6.4]{thomasontrobaugh}. We prove the following result.

\begin{bigthm}\label{relative}Let $A$ be a unital associative ring and
$I \subset A$ a two-sided nilpotent ideal. Suppose that the prime
number $p$ is nilpotent in $A$. Then, for every integer $q$, the
relative group $K_q(A,I)$ is a $p$-primary torsion group of bounded
exponent.
\end{bigthm}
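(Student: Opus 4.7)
The natural route is through the cyclotomic trace. By McCarthy's theorem, for any nilpotent ideal $I$ in $A$ the cyclotomic trace
$$\tr \colon K(A, I) \longrightarrow \TC(A, I; p)$$
induces an equivalence after $p$-completion; equivalently, its fibre has uniquely $p$-divisible homotopy groups. Weibel shows that $K_q(A, I)$ is $p$-primary torsion, and the analogous statement for $\TC_q(A, I; p)$ holds for similar reasons, since $p$ is nilpotent in $A$ makes $\THH(A, I)$ a $p$-power torsion spectrum, a property inherited by the various fixed-point and homotopy-limit constructions defining $\TC$. A uniquely $p$-divisible $p$-primary torsion group is zero, so $\tr$ is an actual weak equivalence and it suffices to prove that $\TC_q(A, I; p)$ is of bounded $p$-exponent.

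To analyze $\TC(A, I; p)$, I would use the fundamental cofibre sequence of Hesselholt--Madsen,
$$\TC(A, I; p) \longrightarrow \TR(A, I; p) \xrightarrow{R - F} \TR(A, I; p),$$
where $\TR(A, I; p) = \holim_n \TR^n(A, I; p)$ and $\TR^n(A, I; p) = \THH(A, I)^{C_{p^{n-1}}}$ denotes the genuine $C_{p^{n-1}}$-fixed points. Combined with the Milnor $\lim^1$ short exact sequence for the homotopy limit, this reduces the task to bounding the $p$-exponent of $\TR^n_q(A, I; p)$ uniformly in $n$. The principal tool is the isotropy-separation cofibration
$$\THH(A, I)_{hC_{p^{n-1}}} \longrightarrow \TR^n(A, I; p) \xrightarrow{R} \TR^{n-1}(A, I; p),$$
permitting an inductive attack. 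Since $p^N = 0$ in $A$ for some $N$, the groups $\THH_*(A, I)$ are of bounded $p$-exponent; the nilpotence $I^m = 0$ produces a finite filtration of $\THH(A, I)$ with subquotients of the form $\THH(A/I; I^j / I^{j+1})$, to which standard spectral-sequence techniques for cyclic-group homotopy orbits can be applied.

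The main obstacle, I expect, is the uniformity of the exponent bound in $n$. A naive induction via the isotropy-separation cofibration loses control at each step, since it combines two potentially unbounded factors. Overcoming this ought to require showing that iterates of the restriction map $R$ annihilate the homotopy-orbit contribution after a bounded number of steps, a pro-vanishing statement whose plausibility rests on the finite nilpotence of $I$ together with the Witt-vector structure of the tower $\{\TR^n(A, I; p)\}_n$. Establishing this uniform pro-vanishing is the technical heart of the approach.
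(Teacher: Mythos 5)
Your overall reduction is reasonable in outline (a trace-theorem argument reducing Theorem A to bounded $p$-exponent of the relative topological cyclic homology, roughly parallel to the paper's use of the arithmetic square, Goodwillie, and McCarthy), but there is a genuine gap, and it sits exactly where the paper does its real work. First, your claim that $p$-power torsion of $\THH(A,I)$ is ``inherited by the various fixed-point and homotopy-limit constructions defining $\TC$'' is false as stated: each $\TR_q^n(A;p)$ and $\TC_q^n(A;p)$ is indeed bounded $p$-torsion when $p$ is nilpotent in $A$ (the paper's Lemma 1.1), but the homotopy limits over $n$ need not even be torsion --- the paper's own example is $\TC_0(\Fp;p)=\TR_0(\Fp;p)=\Zp$. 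So the assertion that $\TC_q(A,I;p)$ is $p$-primary torsion is not a formal consequence of torsion-ness of $\THH$; it is essentially the theorem to be proved, and your use of it to kill the uniquely $p$-divisible fiber of the trace is circular until it is established.

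Second, you explicitly defer the uniform control in $n$ as ``the technical heart,'' and the mechanism you gesture at would not work: the exponents of $\TR_q^n(A,I;p)$ are in general not bounded uniformly in $n$, so the plan of bounding them uniformly and feeding that into the Milnor sequence is a dead end, and the claimed finite filtration of $\THH(A,I)$ with subquotients $\THH(A/I;I^j/I^{j+1})$ is neither justified nor what is used. The paper instead reduces, by induction on the nilpotence degree and by a degreewise-free simplicial resolution, to a split square-zero extension $A=B\ltimes I$; there it uses the wedge decomposition $\bigvee_{r\geqslant 1}T(B\ltimes I;r)\xto{\sim}T(A,I)$, whose $r$-th piece is $(r-2)$-connected, so that for fixed $q$ only finitely many summands contribute and the $R$-tower stabilizes after finitely many stages. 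It then writes $\TC(A,I;p)$ as the $R$-equalizer on $\TF=\holim_F$, and the decisive input is in the Frobenius direction: the transfer maps on the $E^2$-terms of the group-homology spectral sequence for $\mathbb{H}_{\cdot}(C_{p^{n-1}},T(B\ltimes I;r))$ are evaluated, showing these pro-systems (indexed by $n$ with $F$ as structure maps) are isomorphic to constant pro-abelian groups of bounded exponent; induction on the $p$-adic valuation of $r$ then gives boundedness of $\TF_q(B\ltimes I;r;p)$. It is this Frobenius/transfer computation, not a pro-vanishing of iterated $R$ on the homotopy-orbit terms, that supplies the uniformity your sketch leaves open, so as written the proposal identifies the right target but does not prove the statement.
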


Thm.~\ref{relative} implies, in particular, that the $p$-completion
map is an isomorphism 
$$K_q(A,I) \xto{\sim} K_q(A,I;\Zp).$$
In general, however, the exponent of $K_q(A,I)$ depends on the degree
$q$. For instance, if $A = \Fp[x]/(x^m)$ and $I=(x)$, the exponent
tends to infinity with $q$~\cite{hm1}. From Thm.~\ref{relative}
together with a theorem of McCarthy~\cite[Main Thm.]{mccarthy1} and a
previous theorem of our own~\cite[Thm.~2.1.1]{gh3}, we conclude the
following result:

\begin{bigthm}\label{relativetheorem}Let $A$ be a unital associative
ring, and let $I \subset A$ be a nilpotent two-sided ideal. Suppose
that the prime number $p$ is nilpotent in $A$. Then, for every integer
$q$, the cyclotomic trace map induces an isomorphism of pro-abelian
groups
$$K_q(A,I) \xto{\sim} \{ \TC_q^n(A,I;p) \}.$$
The pro-abelian group on the right-hand side is indexed by positive
integers $n$.
\end{bigthm}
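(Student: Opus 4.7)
The strategy is to show that Theorem~\ref{relative} removes the only obstruction to upgrading the existing $p$-completed / pro-level comparison theorems of McCarthy~\cite[Main Thm.]{mccarthy1} and of our earlier paper~\cite[Thm.~2.1.1]{gh3} into an honest pro-isomorphism between the $K$-group on the left and the $\TC^n$-tower on the right.

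The first step is to note that, by Theorem~\ref{relative}, the group $K_q(A,I)$ is $p$-primary torsion of bounded exponent, say killed by $p^N$. Any abelian group annihilated by $p^N$ is its own $p$-completion, so the canonical map
$$K_q(A,I) \longrightarrow K_q(A,I;\Zp)$$
is an isomorphism. In particular, viewing $K_q(A,I)$ as a constant pro-abelian group, it is identified with the pro-abelian group $\{K_q(A,I;\Zp)\}$.

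The second step is to invoke \cite[Thm.~2.1.1]{gh3}. McCarthy's theorem~\cite[Main Thm.]{mccarthy1} produces a weak equivalence of spectra $K(A,I)_p^\wedge \xto{\sim} \TC(A,I;p)_p^\wedge$, and \cite[Thm.~2.1.1]{gh3} refines this to a weak equivalence of pro-spectra, yielding an isomorphism of pro-abelian groups
$$\{K_q(A,I;\Zp)\} \xto{\sim} \{\TC_q^n(A,I;p;\Zp)\}.$$
Combined with the first step, I already obtain an isomorphism $K_q(A,I)\xto{\sim}\{\TC_q^n(A,I;p;\Zp)\}$ in the category of pro-abelian groups.

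The third and last step is to descend on the right from $\Zp$-coefficients to integral coefficients. Transferring bounded exponent across the pro-isomorphism of the second step shows that the pro-abelian group $\{\TC_q^n(A,I;p;\Zp)\}$ has bounded $p$-exponent in the pro-sense. Using the universal coefficient sequence relating $\TC_q^n(A,I;p;\Zp)$ to $\TC_q^n(A,I;p)\otimes\Zp$ and $\Tor_1^{\Z}(\TC_{q-1}^n(A,I;p),\Zp)$, together with the fact that on pro-abelian groups of bounded $p$-exponent the $p$-completion map is a pro-isomorphism, I deduce that $\{\TC_q^n(A,I;p)\}\xto{\sim}\{\TC_q^n(A,I;p;\Zp)\}$. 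Composing the pro-isomorphisms then gives the claim.

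The main obstacle is the passage in the third step from a bounded-exponent statement about ordinary abelian groups to a bounded-exponent statement about pro-abelian groups, so that $p$-completion can be dropped from the $\TC^n$-tower without damaging the pro-structure. Once this transfer is justified, the proof is a routine composition of the pro-isomorphisms described above.
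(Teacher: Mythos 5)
Your approach is genuinely different from the one in the body of the paper, and it is closer to the sketch given in the introduction: instead of first establishing the absolute statement $\TC_q(A,I;p)\xto{\sim}\{\TC_q^n(A,I;p)\}$ (the paper's Theorem~\ref{TCrelative}) and then composing with the isomorphism $K_q(A,I)\xto{\sim}\TC_q(A,I;p)$ extracted from the proof of Theorem~\ref{relative}, you take Theorem~\ref{relative} as a black box and combine it with McCarthy's theorem and~\cite[Thm.~2.1.1]{gh3} directly. This reorganization is legitimate in principle and would shorten the exposition, since it sidesteps the formula for $\TC(A,I;p)$ as a product over $j\in I_p$ and the Frobenius-homology spectral sequence computation.

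However, your third step has a genuine gap. You argue that, because the pro-abelian group $\{\TC_q^n(A,I;p;\Zp)\}$ is pro-isomorphic to the constant group $K_q(A,I)$ of bounded $p$-exponent, the $p$-completion map $\{\TC_q^n(A,I;p)\}\to\{\TC_q^n(A,I;p;\Zp)\}$ is a pro-isomorphism, invoking the general fact that $p$-completion is a pro-isomorphism on pro-groups of bounded $p$-exponent. But that general fact applies to $\{\TC_q^n(A,I;p)\}$ only once you already know \emph{that} pro-group is of bounded $p$-exponent, which is exactly what you have not established. The bounded-exponent information you have transported from the $K$-theory side lives on the $\Zp$-coefficient (equivalently, $p$-completed) groups, and $(-)\otimes\Zp$ annihilates both the prime-to-$p$ torsion and the cotorsion of $\TC_q^n(A,I;p)$; thus knowing that $\{\TC_q^n(A,I;p)\otimes\Zp\}$ is pro-constant of bounded exponent tells you nothing about, say, a level-wise $\Z/\ell$ or $\Z$ summand with nonzero transition maps. (Take $M_n=\Z/\ell$ with identity transitions, $\ell\ne p$: then $\{M_n\otimes\Zp\}$ is pro-zero while $\{M_n\}\to\{M_n\otimes\Zp\}$ is not a pro-isomorphism.) The missing ingredient is precisely the paper's Lemma~\ref{TCTR}: each $\TC_q^n(A,I;p)$ is already a $p$-primary torsion group of bounded exponent, a consequence of the $W_n(\Z/p^N\Z)$-module structure on $\TR_q^n$~\cite[Thm.~F, Prop.~2.7.1]{hm1}. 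Once you invoke that lemma (together with Lemma~\ref{proabeliangrouplemma} to compare integral with $\Z/p^v$-coefficients level-wise in $n$, which is cleaner than the $\Zp$-coefficient universal-coefficient sequence you cite), your proposal becomes a correct alternative argument. A minor additional caveat: for the argument to actually prove that the \emph{cyclotomic trace} map is the isomorphism in question, one must observe that each of your pro-isomorphisms is the natural comparison map, so that the composite agrees with the trace.
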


We remark that Thm.~\ref{relativetheorem} is equivalent to the
statement that, for $n$ large, the group $K_q(A,I)$ embeds as a direct
summand in $\TC_q^n(A,I;p)$ and that the limit system of cokernels
satisfies the Mittag-Leffler condition and has limit zero. 

Let $f \colon A \to B$ is a map of unital associative rings and $I$ a
two-sided ideal of $A$ that is mapped isomorphically onto a 
two-sided ideal of $B$. Then one defines the bi-relative
$K$-theory spectrum $K(A,B,I)$ to be the mapping fiber of the map of
relative $K$-theory spectra $K(A,I) \to K(B,f(I))$ induced by the map
$f$. It follows that there is a long-exact sequence of homotopy groups
$$\cdots \to K_q(A,B,I) \to K_q(A,I) \to K_q(B,f(I)) \xto{\partial}
K_{q-1}(A,B,I) \to \cdots.$$
We prove the following result. 

\begin{bigthm}\label{birelative}Let $f \colon A \to B$ be a map of
unital associative rings, let $I \subset A$ be a two-sided ideal and
assume that $f \colon I \to f(I)$ is an isomorphism onto a two-sided
ideal of $B$. Suppose that the prime number $p$ is nilpotent in
$A$. Then, for every integer $q$, the bi-relative group $K_q(A,B,I)$
is a $p$-primary torsion group of bounded exponent.
\end{bigthm}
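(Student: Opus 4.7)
My plan is to bootstrap from Theorem~\ref{relative} using topological cyclic homology and the excision properties of $\THH$. The ideal $I$ in a Milnor square need not be nilpotent, so Theorem~\ref{relative} does not apply directly to $K(A,I)$ or $K(B,f(I))$, and the exponent bound cannot be read off from the two-out-of-three principle in the long exact sequence. The advantage of the bi-relative spectrum $K(A,B,I)$ is that, unlike ordinary $K$-theory, $\THH$ satisfies excision for Milnor squares, so one expects sharper control on the bi-relative side than on either constituent separately.

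The first step is a bi-relative analogue of \cite[Thm.~2.1.1]{gh3}: the cyclotomic trace should induce a pro-isomorphism
$$K_q(A,B,I) \xto{\sim} \{\TC_q^n(A,B,I;p)\}.$$
Since $K_q(A,B,I)$ is already $p$-primary torsion by Weibel's theorem, no $p$-completion is needed on the left. This shifts the problem to a uniform exponent bound on the bi-relative pro-$\TC$ system.

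The second step is to exploit excision for $\THH$. Because $f \colon I \to f(I)$ is an isomorphism of non-unital rings, the bi-relative spectrum $\THH(A,B,I)$, and consequently each $\TC^n(A,B,I;p)$, should admit a description in terms of data attached only to $I$ together with its $A/I$-bimodule structure, with the ambient rings $A$ and $B$ cancelling out. This presentation is amenable to the same Witt-complex type analysis that underlies the proof of Theorem~\ref{relative}, and the nilpotence of $p$ in $A$ forces a uniform bound on the exponent of $\TC_q^n(A,B,I;p)$ as $n$ varies.

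The main obstacle I anticipate is making the excision reduction quantitative and uniform in the cyclotomic level $n$: qualitative excision for $\THH$ alone would only recover $p$-primary torsion, not bounded exponent, so one must carry an explicit exponent through the identification of the bi-relative spectrum with a relative-type expression in $I$. Once that uniform bound on the pro-system is in hand, combining it with the pro-isomorphism above and a Mittag-Leffler argument of the kind recorded after Theorem~\ref{relativetheorem} yields the desired exponent bound on $K_q(A,B,I)$.
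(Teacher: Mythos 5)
Your instincts about the obstacle are sound — Theorem~\ref{relative} indeed cannot be applied to $K(A,I)$ or $K(B,f(I))$ since $I$ need not be nilpotent, and a two-out-of-three argument on that long exact sequence fails. But the plan as written has a genuine circularity in its first step. The only available bi-relative analogue of~\cite[Thm.~2.1.1]{gh3} is the finite-coefficient pro-isomorphism of~\cite[Thm.~1]{gh4},
$$K_q(A,B,I;\Z/p^v\Z) \xto{\sim} \{\TC_q^n(A,B,I;p,\Z/p^v\Z)\},$$
and to upgrade this to the integral pro-isomorphism you write down, you must know that $K_q(A,B,I)$ (or $K_{q-1}(A,B,I)$) is $p$-primary torsion of \emph{bounded exponent} — that is Lemma~\ref{proabeliangrouplemma}, and mere $p$-primary torsion from Weibel's theorem is not enough, since a $p$-divisible summand such as $\Q_p/\Z_p$ would make the comparison with the pro-system $\{\Tor(\pi_{q-1},\Z/p^v\Z)\}$ fail. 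But bounded exponent is exactly the content of the theorem you are trying to prove. In the paper's logical order, the integral pro-isomorphism is Theorem~\ref{birelativetheorem}, and it is a \emph{consequence} of Theorem~\ref{birelative}, not an input to it.

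The second step is also where the real work would have to happen, and it is left entirely open. You correctly observe that qualitative $\THH$-excision only recovers $p$-primary torsion, not a uniform exponent, and that one would have to carry an explicit exponent through the identification of the bi-relative spectrum with an expression in $I$ alone — but you do not say how, and there is no analogue in the bi-relative setting of the split square-zero wedge decomposition that drives the relative argument. The paper sidesteps both difficulties by staying in algebraic $K$-theory: it uses a Suslin-type pro-excision theorem (Theorem~\ref{suslintheorem}), whose hypothesis it verifies when $I$ embeds in a free $\Fp$-algebra (Proposition~\ref{proexcision}), to obtain a pro-isomorphism $K_q(A,B,I) \xto{\sim} \{K_q(A/I^m,B/I^m,I/I^m)\}$. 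This replaces $I$ by the nilpotent ideal $I/I^m$, after which the bi-relative groups in the pro-system are controlled by Theorem~\ref{relative} via the defining long exact sequence, and the general case is reached by a simplicial resolution argument (Lemma~\ref{resolution}). You might look for a way to establish the pro-constancy of $\{\TC_q^n(A,B,I;p)\}$ directly, in analogy with Theorem~\ref{TCrelative}, but absent that the $K$-theoretic pro-excision route is the one that closes the argument.
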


We remark again that Thm.~\ref{birelative} implies that completion map
$$K_q(A,B,I) \to K_q(A,B,I;\Zp)$$
is an isomorphism. The exponent of $K_q(A,B,I)$ in general depends on
the degree $q$. For example, if $A = \Fp[x,y]/(xy)$, $B = \Fp[x]
\times \Fp[y]$, and  $I = (x,y)$, the exponent tends to infinity with
$q$~\cite{h4}. Thm.~\ref{birelative} and our previous
theorem~\cite[Thm.~1]{gh4} implies the following result: 

\begin{bigthm}\label{birelativetheorem}Let $f \colon A \to B$ be a
map of unital associative rings, let $I \subset A$ be a
two-sided ideal and assume that $f \colon I \to f(I)$ is an
isomorphism onto a two-sided ideal of $B$. Suppose that the prime
number $p$ is nilpotent in $A$. Then, for every integer $q$, the
cyclotomic trace map
$$K_q(A,B,I) \to \{ \TC_q^n(A,B,I;p) \}$$
is an isomorphism of pro-abelian groups. The pro-abelian group on the
right-hand side is indexed by positive integers $n$.
\end{bigthm}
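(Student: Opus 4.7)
The plan is to deduce Theorem~\ref{birelativetheorem} from Theorem~\ref{birelative} together with the bi-relative cyclotomic trace pro-isomorphism of \cite[Thm.~1]{gh4}, paralleling the deduction of Theorem~\ref{relativetheorem} from Theorem~\ref{relative}, McCarthy's theorem, and \cite[Thm.~2.1.1]{gh3}. The input I will use from \cite[Thm.~1]{gh4} is that, for every $v \geq 1$, the bi-relative cyclotomic trace induces a pro-isomorphism
$$K_q(A,B,I; \Z/p^v) \xto{\sim} \{\TC_q^n(A,B,I; p, \Z/p^v)\}$$
of pro-abelian groups indexed by positive integers $n$.

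First I would fix an integer $q$ and, by Theorem~\ref{birelative}, choose $v$ large enough that $p^v \cdot K_q(A,B,I) = 0$ and $p^v \cdot K_{q-1}(A,B,I) = 0$. The universal coefficient sequence then collapses to a short exact sequence
$$0 \to K_q(A,B,I) \to K_q(A,B,I; \Z/p^v) \to K_{q-1}(A,B,I) \to 0,$$
identifying $K_q(A,B,I)$ canonically with the kernel of the Bockstein on the mod $p^v$ $K$-group. Transporting this identification across the pro-isomorphism of \cite[Thm.~1]{gh4}, the integral group $K_q(A,B,I)$ is identified with the kernel of the Bockstein on the pro-group $\{\TC_q^n(A,B,I; p, \Z/p^v)\}$.

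Second I would argue that, for $v$ chosen as above, the analogous universal coefficient pro-sequence for integral $\TC$-coefficients identifies $\{\TC_q^n(A,B,I; p)\}$ with the same kernel of the Bockstein inside $\{\TC_q^n(A,B,I; p, \Z/p^v)\}$. This uses crucially that the mod $p^v$ pro-group is pro-isomorphic to a bounded $p$-torsion abelian group, which forces $\{\TC_q^n(A,B,I;p)\}$ itself to be pro-$p$-torsion of bounded pro-exponent in degrees $q$ and $q-1$. The two identifications together force the integral cyclotomic trace $K_q(A,B,I) \to \{\TC_q^n(A,B,I;p)\}$ to be a pro-isomorphism; equivalently, in the terminology of the remark after Theorem~\ref{relativetheorem}, for $n$ large $K_q(A,B,I)$ embeds as a direct summand in $\TC_q^n(A,B,I;p)$ with cokernels satisfying Mittag-Leffler and having zero limit.

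The main obstacle will be this second step: showing that the pro-group $\{\TC_q^n(A,B,I;p)\}$ is itself pro-$p$-torsion of bounded pro-exponent. This is not a formal consequence of the mod $p^v$ statement, and requires combining the bounded-exponent output of Theorem~\ref{birelative} with the $\Z/p^v$-coefficient pro-isomorphism of \cite[Thm.~1]{gh4} to kill the $\lim$ and $\lim^1$ terms that govern the passage between integral and finite-coefficient pro-groups in the pro-category of abelian groups.
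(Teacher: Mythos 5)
Your overall strategy is the same as the paper's: compare $K_q(A,B,I)$ to $\{\TC_q^n(A,B,I;p)\}$ through a commutative square whose other two corners are the mod-$p^v$ pro-groups, invoke Theorem~\ref{birelative} to handle the top arrow, and invoke \cite[Thm.~1]{gh4} to handle the right vertical arrow. That part is correct.

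However, your ``main obstacle'' is not actually an obstacle, and the route you sketch to resolve it would not work. You worry that you need $\{\TC_q^n(A,B,I;p)\}$ to be pro-$p$-torsion of \emph{bounded pro-exponent}, and you propose to extract this from the mod-$p^v$ pro-isomorphism of \cite[Thm.~1]{gh4}. The second claim is false as stated: knowing that the mod-$p^v$ reduction is pro-isomorphic to a bounded $p$-torsion group tells you nothing about the integral pro-group (compare $\Zp \to \Z/p^v$). The first claim is also not what is needed. What the paper uses is Lemma~\ref{TCTR}: since $p$ is nilpotent in $A$, each group $\TC_q^n(A,B,I;p)$ for \emph{fixed} $n$ is already a $p$-primary torsion group of bounded exponent (this follows from the Witt-vector module structure on $\TR^n_*$ and the long exact bi-relative sequence, with no uniformity in $n$ required or claimed). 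Applying Lemma~\ref{proabeliangrouplemma} levelwise in $n$ then gives the pro-isomorphism
$$\{\TC_q^n(A,B,I;p)\} \xto{\sim} \{\TC_q^n(A,B,I;p,\Z/p^v\Z)\}$$
directly, with no $\lim$ or $\lim^1$ analysis and no appeal to \cite[Thm.~1]{gh4} at this stage. In short: you are missing Lemma~\ref{TCTR}, and the workaround you suggest in its place both overclaims what the mod-$p^v$ statement yields and overshoots what is actually needed.

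Separately, your Bockstein-kernel reformulation of the universal coefficient argument is a correct way to organize what Lemma~\ref{proabeliangrouplemma} packages, but it is more elaborate than necessary; the paper's argument simply observes that all four maps in the square are maps of pro-abelian groups and that three of them are isomorphisms, whence so is the fourth.
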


We again note that Thm.~\ref{birelativetheorem} is equivalent to the
statement that, for $n$ large, the group $K_q(A,B,I)$ is a direct
summand in $\TC_q^n(A,B,I;p)$ and that the limit system of cokernels
satisfies the Mittag-Leffler condition and has limit zero. 

Finally, we mention that the main motivation for the work reported in
this paper was the application of Thms.~\ref{relativetheorem}
and~\ref{birelativetheorem} to the proof in~\cite[Thm.~B]{gh6} that
the mapping fiber of the cyclotomic trace map
$$K(X) \to \{ \TC^n(X;p) \}$$
satisfies descent for the $\operatorname{cdh}$-topology on the 
category of schemes separated and essentially of finite type over an
infinite perfect field $k$ of positive characteristic $p$, provided
that resolution of singularities holds over $k$. The main advantage of
the functor $\{ \TC_q^n(-;p) \}$ that appears in the statement of
Thms.~\ref{relativetheorem} and~\ref{birelativetheorem} in comparison
to the functor $\TC_q(-;p)$ that appears in McCarthy's
theorem~\cite{mccarthy1} is that the former preserves filtered
colimits while the latter, in general, does not. Therefore, replacing
the latter functor by the former, the methods of sheaf cohomology
become available.

We recall that a pro-object of a category $\mathscr{C}$ is a functor
from a directed partially ordered set to the category $\mathscr{C}$
and that a strict map between two pro-objects with the same indexing
set is a natural transformation. A general map from a pro-object
$X = \{X_s\}_{s \in S}$ to a pro-object $Y = \{Y_t\}_{t \in T}$ is an
element of the set
$$\Hom_{\operatorname{pro}-\mathscr{C}}(X,Y)=
\operatornamewithlimits{lim}_T
\operatornamewithlimits{colim}_S\Hom_{\mathscr{C}}(X_s,Y_t).$$
In particular, a pro-object $X = \{X_s\}_{s \in S}$ in a category with
a null-object is zero if for all $s \in S$, there exists $s' \geq s$
such that the map $X_{s'} \to X_s$ is zero. We will often omit the
indexing set $S$ from the notation. It is then understood that the
indexing set $S$ is the range of all indices that are not assumed to
be fixed.

\section{Non-connective $K$-theory and the cyclotomic trace map}\label{nonconnectivesection}

In this section, we show that the cyclotomic trace map extends to a
map from the Bass completed non-connective $K$-theory to topological
cyclic homology. We first briefly review topological cyclic homology
and the cyclotomic trace map and refer to~\cite[Sect.~1]{hm3}
and~\cite{h3} for details.

Let $A$ be a unital associative ring. The topological Hochschild
spectrum $T(A)$ is a symmetric orthogonal $\mathbb{T}$-spectrum, where
$\mathbb{T}$ is the multiplicative group of complex numbers of modulus
$1$. Let $p$ be a prime number, and let $C_{p^{n-1}} \subset
\mathbb{T}$ be the subgroup of the indicated order. We define
$$\TR^n(A;p) = F((\mathbb{T}/C_{p^{n-1}})_+,T(A))^{\mathbb{T}}$$
to be the fixed point spectrum of the function $\mathbb{T}$-spectrum
$F((\mathbb{T}/C_{p^{n-1}})_+,T(A))$. Its homotopy groups are the
equivariant homotopy groups 
$$\TR_q^n(A;p) =
[ S^q \wedge (\mathbb{T}/C_{p^{n-1}})_+, T(A) ]_{\mathbb{T}}.$$
There are two maps of symmetric orthogonal spectra
$$R, F \colon \TR^n(A;p) \to \TR^{n-1}(A;p)$$
called the restriction and Frobenius maps. The symmetric orthogonal
spectrum $\TC^n(A;p)$ is defined to be the homotopy equalizer of the
maps $R$ and $F$ and the topological cyclic homology spectrum to be
the homotopy limit
$$\TC(A;p) = \holim \TC^n(A;p)$$
where the structure maps are the maps induced by the restriction
maps. We also consider the homotopy limits
$$\begin{aligned}
\TR(A;p) & = \holim_R \TR^n(A;p) \cr
\TF(A;p) & = \holim_F \TR^n(A;p) \cr
\end{aligned}$$
of the spectra $\TR^n(A;p)$ with respect to the restriction and
Frobenius maps, respectively. The Frobenius map induces a self-map of
$\TR(A;p)$, and $\TC(A;p)$ is canonically isomorphic to the homotopy
equalizer of this map and the identity map. Similarly, the restriction
map induces a self-map of $\TF(A;p)$ and $\TC(A;p)$ is canonically
isomorphic to the homotopy equalizer of this map and the identity
map. In particular, we have long-exact sequences of homotopy groups
$$\begin{aligned}
{} & \cdots \to \TC_q(A;p) \to \TR_q(A;p) \xto{\id - F} \TR_q(A;p)
\xto{\partial} \TC_{q-1}(A;p) \to \cdots \cr
{} & \cdots \to \TC_q(A;p) \to \TF_q(A;p) \xto{R - \id} \TF_q(A;p)
\xto{\partial} \TC_{q-1}(A;p) \to \cdots \cr
\end{aligned}
$$
It was proved in~\cite[Thm.~F]{hm1} that, if the ring $A$ is an
algebra over the commutative ring $k$, then the equivariant homotopy
groups $\TR_q^n(A;p)$ are modules over the ring $W_n(k)$ of Witt
vectors of length $n$ in $k$.

\begin{lemma}\label{TCTR}Let $A$ be a unital associative ring and
suppose that the prime number $p$ is nilpotent in $A$. Then, for all
integers $q$ and $n \geqslant 1$, the groups $\TR_q^n(A;p)$ and
$\TC_q^n(A;p)$ are $p$-primary torsion groups of bounded exponent.
\end{lemma}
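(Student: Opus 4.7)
The plan is to prove the bound for $\TR_q^n(A;p)$ by induction on $n$, and then deduce the bound for $\TC_q^n(A;p)$ from the defining homotopy equalizer. Fix $N \geq 1$ with $p^N = 0$ in $A$.

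For the base case $n = 1$, I use that $\TR^1_q(A;p) = \THH_q(A)$ carries a natural graded $A$-bimodule structure, inherited from the $A$-bimodule structure at each simplicial level of the cyclic bar construction. Unitality of this action identifies multiplication by the integer $p^N$ with the action of the ring element $p^N \cdot 1_A = 0 \in A$, so $\THH_q(A)$ is annihilated by $p^N$.

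For the inductive step, I would invoke the fundamental cofibration sequence for cyclotomic spectra (Hesselholt-Madsen),
\[
T(A)_{hC_{p^{n-1}}} \to \TR^n(A;p) \xto{R} \TR^{n-1}(A;p),
\]
arising from the isotropy separation sequence together with the cyclotomic structure. The associated long exact sequence presents $\TR_q^n(A;p)$ as an extension of a subgroup of $\TR_q^{n-1}(A;p)$ by a quotient of $\pi_q(T(A)_{hC_{p^{n-1}}})$. The former has bounded exponent by the inductive hypothesis, so it suffices to bound the latter. For this I would use the strongly convergent homotopy orbit spectral sequence
\[
E^2_{s,t} = H_s(C_{p^{n-1}}; \THH_t(A)) \Longrightarrow \pi_{s+t}(T(A)_{hC_{p^{n-1}}}).
\]
Each $E^2$-term is annihilated by $p^N$ by the base case, and since $T(A)$ is connective the spectral sequence is concentrated in $s,t \geq 0$, so only the finitely many bidegrees with $0 \leq s \leq q$ contribute to the filtration of $\pi_q$; hence $\pi_q(T(A)_{hC_{p^{n-1}}})$ is annihilated by $p^{N(q+1)}$. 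A routine iterated-extension argument then yields the bound for $\TR_q^n(A;p)$, with a bound that depends on $n$ and $q$ (as it must, in view of the example $\TR_0^n(\Fp;p) = W_n(\Fp) = \Z/p^n$).

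For $\TC_q^n(A;p)$, the definition as the homotopy equalizer of $R, F \colon \TR^n(A;p) \to \TR^{n-1}(A;p)$ produces the fiber sequence $\TC^n(A;p) \to \TR^n(A;p) \xto{R-F} \TR^{n-1}(A;p)$, whose long exact sequence sandwiches $\TC_q^n(A;p)$ between groups of bounded exponent, finishing the proof. The step that I expect to require the most care is setting up the precise form of the fundamental cofibration sequence and the convergence of the homotopy orbit spectral sequence inside the genuine $\mathbb{T}$-equivariant framework used in the paper; once that is in place, the remainder of the argument is standard short exact sequence bookkeeping on bounded-exponent abelian groups.
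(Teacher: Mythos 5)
Your argument is correct in substance, but it takes a genuinely different and somewhat longer route than the paper's. The paper proves the bound for $\TR_q^n(A;p)$ in a single step by invoking the Hesselholt--Madsen result \cite[Thm.~F, Prop.~2.7.1]{hm1} that $\TR_q^n(A;p)$ is a module over the Witt ring $W_n(\Z/p^N\Z)$; since $p^{Nn}\cdot 1 = 0$ in that ring, one gets at once that $\TR_q^n(A;p)$ is annihilated by $p^{Nn}$, a bound that is uniform in $q$. You instead reprove a weaker version of this fact by induction on $n$: base case $n=1$ from the observation that $\THH_q(A)$ is killed by $p^N$, then the inductive step via the fundamental cofibration sequence $T(A)_{hC_{p^{n-1}}} \to \TR^n(A;p) \xto{R} \TR^{n-1}(A;p)$ and the first-quadrant homotopy orbit spectral sequence $H_s(C_{p^{n-1}};\THH_t(A)) \Rightarrow \pi_{s+t}(T(A)_{hC_{p^{n-1}}})$. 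This works and yields a correct proof, but at the cost of a bound that grows with $q$ as well as with $n$; the Witt vector module structure gives the sharper, $q$-independent exponent for free, and is the cleaner route. The treatment of $\TC_q^n$ from the homotopy equalizer is the same in both arguments.

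One small imprecision in your base case: $\THH_q(A)$ is not naturally an $A$-bimodule (the cyclic face maps do not respect a naive two-sided $A$-action, and already $\THH_0(A)=A/[A,A]$ is only a quotient abelian group). What you actually want is the module structure over the central subring $\Z/p^N\Z \subset A$ (equivalently, over $W_1(\Z/p^N\Z)$), which makes $T(A)$ an $H\Z/p^N\Z$-module spectrum; that does give $p^N\cdot\THH_q(A)=0$. With that correction, the rest of your inductive argument is sound: group homology $H_s(C_{p^{n-1}};M)$ is functorial in $M$, so it inherits annihilation by $p^N$, the spectral sequence is concentrated in $s,t\geq 0$ because $T(A)$ is connective, and the finite filtration plus the long exact sequence of the cofibration give the required, if $q$-dependent, bound.
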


\begin{proof}Suppose that $A$ is an $\Z/p^N\Z$-algebra.
Then~\cite[Thm.~F, Prop.~2.7.1]{hm1} show that the groups
$\TR_q^n(A;p)$ are $W_n(\Z/p^N\Z)$-modules, and therefore, are
annilated by multiplication by $p^{Nn}$. Finally, the long-exact
sequence
$$\cdots \longrightarrow
\TR_{q+1}^{n-1}(A;p) \xto{\,\;\partial\;\,}
\TC_q^n(A;p) \longrightarrow
\TR_q^n(A;p) \xto{R-F} 
\TR_q^{n-1}(A;p) \longrightarrow \cdots$$
shows that $\TC_q^n(A;p)$ is annihilated by $p^{N(2n-1)}$.
\end{proof}

In general, the groups $\TR_q(A;p)$ and $\TC_q(A;p)$ are not
$p$-primary torsion groups of bounded exponent. For example,
$\TC_0(\Fp;p) = \TR_0(\Fp;p) = \Zp$. 

We consider the diagram of canonical inclusions
$$\xymatrix{
{ A } \ar[r]^{f_{+}} \ar[d]^{f_{-}} &
{ A[t] } \ar[d]^{i_+} \cr
{ A[t^{-1}] } \ar[r]^{i_{-}} &
{ A[t^{\pm1}]. } \cr
}$$
The following result shows that $\TR_*^n(-;p)$ is a Bass complete
theory. 

\begin{prop}\label{TRbasscomplete}Let $A$ be a unital associative
ring. Then, for all prime numbers $p$, all integers $q$, and all
positive integers $n$, the sequence
$$0 \to \TR_q^n(A;p) \to 
\overset{ \displaystyle{ \TR_q^n(A[t];p) }}{ \underset{
\displaystyle{ \TR_q^n(A[t^{-1}];p) }}{ \oplus }} \to
\TR_q^n(A[t^{\pm1}];p) \xto{\partial_t} \TR_{q-1}^n(A;p) \to 0$$
where the left-hand map is $(f_{+}^*,-f_{-}^*)$, and where the middle map
is $i_{+}^*+i_{-}^*$, is exact. Moreover, the right-hand map
$\partial_t$ has a section given by multiplication by the image
$d\log[t]_n$ by the cyclotomic trace map of $t \in K_1(\Z[t^{\pm1}])$.
\end{prop}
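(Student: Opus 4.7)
The proposition is a Bass-type fundamental theorem for $\TR^n(-;p)$, parallel to the classical one in algebraic $K$-theory. The plan is to exploit the well-known splitting of topological Hochschild homology of pointed monoid algebras, thereby reducing the assertion to a weight-graded computation, and then to identify the section via the image of the unit $t$ under the cyclotomic trace.

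First, apply the equivalence of cyclotomic spectra
\[ T(A[\Pi]) \simeq T(A) \wedge N^{\cy}(\Pi), \]
valid for any pointed monoid $\Pi$, to the three pointed monoids $\Pi_+ = \{t^i \mid i \geq 0\}_+$, $\Pi_- = \{t^i \mid i \leq 0\}_+$, and $\Pi_\pm = \{t^i \mid i \in \Z\}_+$ that realize $A[t]$, $A[t^{-1}]$, and $A[t^{\pm 1}]$ as pointed monoid algebras over $A$. Passing to $C_{p^{n-1}}$-fixed points and equivariant homotopy yields splittings of $\TR^n_q(A[-];p)$ as $\TR^n_q(A;p)$-modules, indexed by weight (the total exponent of $t$ in a cyclic word).

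Next, one identifies the weight pieces. The weight-zero summand of $N^{\cy}(\Pi_+)$ and of $N^{\cy}(\Pi_-)$ is just $S^0$, contributing a single copy of $\TR^n_q(A;p)$; but the weight-zero summand of $N^{\cy}(\Pi_\pm)$ is $(B\Z)_+ \simeq S^1_+$ because $\Z$ is a group, contributing $\TR^n_q(A;p) \oplus \TR^n_{q-1}(A;p)$ with the extra summand generated by $d\log[t]_n$. For each weight $d \neq 0$, the inclusions $\Pi_+ \hookrightarrow \Pi_\pm$ (for $d > 0$) and $\Pi_- \hookrightarrow \Pi_\pm$ (for $d < 0$) induce equivalences on weight-$d$ summands, since multiplication by the unit $t$ (respectively $t^{-1}$) identifies their $\TR^n$-contributions. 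Exactness of the sequence then reduces to a weight-by-weight check: in each weight $d \neq 0$ the middle map is an isomorphism and the outer groups vanish, while in weight zero the sequence reads
\[ 0 \to \TR^n_q(A;p) \to \TR^n_q(A;p)^{\oplus 2} \to \TR^n_q(A;p) \oplus \TR^n_{q-1}(A;p) \to \TR^n_{q-1}(A;p) \to 0, \]
with the left map the anti-diagonal embedding, the middle map $(a,b) \mapsto (a+b, 0)$, and the right map the projection onto the extra summand.

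Finally, multiplication by $d\log[t]_n$, through the $\TR^n(A[t^{\pm 1}];p)$-module structure, gives a map $\TR^n_{q-1}(A;p) \to \TR^n_q(A[t^{\pm 1}];p)$ landing in the extra weight-zero summand and splitting $\partial_t$. The principal obstacle in this argument is the $\mathbb{T}$-equivariant identification of the weight-zero piece of $N^{\cy}(\Pi_\pm)$ as $S^1_+$ with its standard rotation action, together with the verification that the image of $t \in K_1(\Z[t^{\pm 1}])$ under the cyclotomic trace coincides with the canonical generator of the resulting $\TR^n_0(\Z;p)$-summand; these identifications are formal consequences of the definitions of $N^{\cy}$ and the cyclotomic trace, but they require careful unpacking. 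Once they are in hand, the sequence and its section follow from the weight-graded analysis just described.
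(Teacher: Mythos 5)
Both your argument and the paper's rest on the pointed-monoid decomposition $T(A[\Pi]) \simeq T(A) \wedge N^{\cy}(\Pi)$ and the resulting weight grading of $\TR^n$, so the framework is the same, but the way you extract the exact sequence diverges from the paper's. You attempt a single integral argument: claim that for each $d \neq 0$ the inclusion $\Pi_+ \hookrightarrow \Pi_\pm$ (resp.\ $\Pi_- \hookrightarrow \Pi_\pm$) induces an isomorphism on the weight-$d$ part of $\TR^n$, reducing everything to weight $0$, where $N^{\cy}(\Pi_\pm;0) \simeq S^1_+$ produces the suspension summand. The paper instead compares explicit formulas for $N^{\pm}\TR^n$ and for $\TR^n(A[t^{\pm1}];p)$ after localizing at $p$ (via~\cite[Thm.~B]{hm3} and~\cite[Thm.~2]{h7}), and then handles the localization away from $p$ separately by reducing to $n = 1$ through the decomposition $(R^{n-1-s}F^s)$.

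The gap in your argument is the assertion, offered without proof, that the weight-$d$ summands with $d \neq 0$ agree. Your justification, ``multiplication by the unit $t$ (respectively $t^{-1}$) identifies their $\TR^n$-contributions,'' does not describe a map between the weight-$d$ summands of $\TR_q^n(A[t];p)$ and $\TR_q^n(A[t^{\pm1}];p)$: multiplication by $t$ shifts the weight grading by one and in any case is not available on the polynomial side. What is actually required is that the inclusion of pointed monoids induces a $\mathbb{T}$-equivariant equivalence $N^{\cy}(\Pi_+;d) \to N^{\cy}(\Pi_\pm;d)$ for $d > 0$. This is true, but it is a genuine geometric theorem about cyclic nerves — reducible by edgewise subdivision to the non-equivariant statement, which itself needs a real argument — and not a ``formal consequence of the definitions of $N^{\cy}$ and the cyclotomic trace'' as you suggest at the end. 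The paper's two-step localization exists precisely because it reads the needed comparison off from previously computed $p$-local formulas rather than establishing the equivariant equivalence directly. Without supplying that equivalence, your weight-by-weight exactness check does not close.
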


\begin{proof}We first prove that the statement holds after localizing
the sequence in the statement at $p$. Let $r_{+} \colon A[t] \to A$
and $r_{-} \colon A[t^{-1}] \to A$ be the ring homomorphisms that map
$t$ and $t^{-1}$ to $0$. Since $r_{+} \circ f_{+}$ and $r_{-} \circ
f_{-}$ both are the identity map of $A$, these four maps give rise to
direct sum decompositions
$$\begin{aligned}
\TR_q^n(A[t];p) & = \TR_q^n(A;p) \oplus N^{+}\TR_q^n(A;p) \cr
\TR_q^n(A[t^{-1}];p) & = \TR_q^n(A;p) \oplus N^{-}\TR_q^n(A;p). \cr
\end{aligned}$$
The structure of the relative terms $N^{+}\TR_q^n(A;p)$ and 
$N^{-}\TR_q^n(A;p)$ was determined in~\cite[Thm.~B]{hm3}. The proof
given in loc.~cit.~also leads to a formula for the groups
$\TR_q^n(A[t^{\pm1}];p)$; we refer to~\cite[Thm.~2]{h7} for the
precise statement. By comparing the two formulas, we find that the
map
$$\TR_q^n(A;p) \oplus \TR_{q-1}^n(A;p) \oplus
N^{+}\TR_q^n(A;p) \oplus 
N^{-}\TR_q^n(A;p) \to \TR_q^n(A[t^{\pm1}];p)$$
that, on the first summand, is the map $\iota$ induced by the
$A$-algebra homomorphism $i_{+} \circ f_{+} = i_{-} \circ f_{-}$, on
the second summand is $\iota$ followed by multiplication by the image
by the cyclotomic trace of $t \in K_1(\Z[t^{\pm1}])$, and, on the
third and fourth summands, is the compositions of the canonical
inclusions of $N^{+}\TR_q^n(A;p)$ and $N^{-}\TR_q^n(A;p)$ and
$\TR_q^n(A[t^{-1}];p)$ in $\TR_q^n(A[t];p)$ and the maps induced by
the ring homomorphisms $i_{+}$ and $i_{-}$, respectively,
becomes an isomorphim after localizing at $p$. Hence, the
sequence of the statement becomes exact after localizing at $p$.

It remains to prove the statement after localizing the sequence in the
statement of the proposition away from $p$. We recall
from~\cite[Prop.~4.2.5]{hm1} that, for every unital associative ring
$B$, the map 
$$(R^{n-1-s}F^s) \colon \TR_q^n(B;p) \to \prod_{0 \leqslant s < n}
\TR_q^1(B;p)$$
becomes an isomorphism after localization away from $p$. Therefore, it
will suffice to prove the statement for $n = 1$. In this case, the
description of $\TR_q^n(A[t];p)$, $\TR_q^n(A[t^{-1}];p)$, and
$\TR_q^n(A[t^{\pm1}];p)$ that we recalled above is valid without
localizing at $p$; compare~\cite[Lemma~3.3.1]{hm3}. This completes
the proof. 
\end{proof}

The sequence of Quillen $K$-groups
$$0 \to K_q(A) \to 
\overset{ \displaystyle{ K_q(A[t]) }}{ \underset{
\displaystyle{ K_q(A[t^{-1}]) }}{ \oplus }} \to
K_q(A[t^{\pm1}]) \xto{\partial_t} K_{q-1}(A) \to 0$$
is exact, for $q$ positive. The Bass negative $K$-groups are
recursively defined so as to make the sequence exact, for all integers
$q$~\cite[Chap.~XII,~Sect.~7]{bass}. We recall from
Thomason-Trobaugh~\cite[Def.~6.4]{thomasontrobaugh} that the Bass
completion can be accomplished on the level of spectra. One obtains a
natural transformation
$$c \colon K^Q(A) \to K^B(A)$$
from the Waldhausen $K$-theory spectrum $K^Q(A)$ to a new spectrum
$K^B(A)$ whose homotopy groups in negative degrees are canonically
isomorphic to the Bass negative $K$-groups and the map of homotopy
groups induced by the map $c$ is an isomorphism in non-negative
degrees. We may similarly apply the Bass completion of
Thomason-Trobaugh to the topological cyclic homology functor. 

\begin{cor}\label{TCbasscomplete}The Bass completion map
$$c \colon \TC_q(A;p) \to \TC_q^B(A;p)$$
is an isomorphism, for all integers $q$.
\end{cor}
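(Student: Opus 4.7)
The plan is to deduce the corollary from Proposition~\ref{TRbasscomplete} by showing that $\TC_q(-;p)$ itself satisfies the Bass fundamental exact sequence
$$0 \to \TC_q(A;p) \to \TC_q(A[t];p) \oplus \TC_q(A[t^{-1}];p) \to \TC_q(A[t^{\pm 1}];p) \xto{\partial_t} \TC_{q-1}(A;p) \to 0.$$
By the construction of the Bass completion of Thomason-Trobaugh, the map $c$ is an isomorphism in non-negative degrees and the target $\TC_q^B(-;p)$ satisfies this sequence by design. Hence, once we have the displayed sequence for $\TC_*(-;p)$ itself, the corollary will follow from a downward induction on $q$ together with the five-lemma applied to the comparison of the two Bass recursions degree by degree.

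To produce the sequence for $\TC_*(-;p)$, the plan is to combine Proposition~\ref{TRbasscomplete} with the natural long-exact sequence
$$\cdots \to \TC_q(B;p) \to \TR_q(B;p) \xto{\id - F} \TR_q(B;p) \xto{\partial} \TC_{q-1}(B;p) \to \cdots$$
evaluated on the four rings $B = A, A[t], A[t^{-1}], A[t^{\pm 1}]$ arranged in the Bass square. Proposition~\ref{TRbasscomplete} supplies the Bass fundamental exact sequence for $\TR_q^n(-;p)$ together with an explicit section of $\partial_t$ given by multiplication by $d\log[t]_n$. Because this sequence is split short exact at every level $n$, passage to the homotopy limit along $R$ produces the analogous split short exact sequence for $\TR_q(-;p)$ with no $\lim^1$ obstruction.

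The main obstacle is to check that the section descends through $\id - F$ to the homotopy equalizer $\TC$. This reduces to verifying that $d\log[t]$ is Frobenius-invariant, which in turn follows from its construction as the image under the cyclotomic trace of the class $t \in K_1(\Z[t^{\pm 1}])$ together with the fact that the cyclotomic trace canonically lifts through $\TC$, so that its image lies in the $F$-fixed part of $\TR_*(-;p)$. Granted this, multiplication by $d\log[t]$ commutes with $\id - F$, the split short exact sequence for $\TR_*(-;p)$ descends termwise through the equalizer to give the split Bass exact sequence for $\TC_*(-;p)$, and the downward induction outlined above then completes the proof.
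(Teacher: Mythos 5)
Your proposal is correct, but the last leg of the argument takes a genuinely different route from the paper's. Both you and the paper begin by passing the split Bass fundamental exact sequence of Prop.~\ref{TRbasscomplete} from $\TR_q^n(-;p)$ to $\TR_q(-;p)=\pi_q\holim_R\TR^n(-;p)$, using the section $d\log[t]_n$ to control the $R$-limit. Where the paper and your argument diverge is in how one gets from $\TR$ to $\TC$. The paper first concludes that $c\colon\TR_q(A;p)\to\TR_q^B(A;p)$ is an isomorphism, and then deduces the same for $\TC$ essentially for free: the fiber sequence $\TC(-;p)\to\TR(-;p)\xto{\id-F}\TR(-;p)$ is natural in the ring, and the Thomason--Trobaugh Bass completion, being built from finite homotopy (co)limits and a filtered colimit of the functor evaluated on Laurent polynomial extensions, commutes with this finite limit; so $\TC^B$ is the fiber of $\id-F$ on $\TR^B$, and $\TC\to\TC^B$ is a weak equivalence because $\TR\to\TR^B$ is. You instead produce the Bass fundamental exact sequence for $\TC_q(-;p)$ directly and then run the recursive five-lemma comparison with $\TC_q^B(-;p)$. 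That works, but it requires the extra verification you flag, namely that $d\log[t]$ is $F$-invariant so that multiplication by it descends to the equalizer; your justification of this (the class comes from $K_1(\Z[t^{\pm1}])$ via the cyclotomic trace, so its image in $\TR_1$ lifts to $\TC_1$ and hence is killed by $\id-F$) is correct, as is the observation that the pairing used is $F$-equivariant since $F$ is multiplicative. The trade-off: your route is more hands-on and establishes the Bass fundamental theorem for $\TC$ itself, which is a slightly stronger intermediate statement, whereas the paper avoids having to track the interaction of the section with $F$ at all by exploiting the formal compatibility of Bass completion with finite limits.
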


\begin{proof}It follows from Prop.~\ref{TRbasscomplete} that the
sequence 
$$0 \to \TR_q(A;p) \to 
\overset{ \displaystyle{ \TR_q(A[t];p) }}{ \underset{
\displaystyle{ \TR_q(A[t^{-1}];p) }}{ \oplus }} \to
\TR_q(A[t^{\pm1}];p) \xto{\partial_t} \TR_{q-1}(A;p) \to 0$$
is exact, for all integers $q$, and that the right-hand map
$\partial_t$ has a section given by multiplication by the image
$d\log[t]$ by the cyclotomic trace of $t \in K_1(\Z[t^{\pm1}])$. This
implies that the Bass completion map
$$c \colon \TR_q(A;p) \to \TR_q^B(A;p)$$
is an isomorphism, for all integers $q$. This, in turn, implies that
the Bass completion map of the statement is an isomorphism, for all
integers $q$. 
\end{proof}

The cyclotomic trace map is a natural map of symmetric spectra
$$\tr \colon K^Q(A) \to \TC(A;p).$$
It was originally defined by
B\"{o}kstedt-Hsiang-Madsen~\cite{bokstedthsiangmadsen}. However, a
technically better construction of this map was given by
Dundas-McCarthy~\cite[Sect.~2.0]{dundasmccarthy}. The latter
construction was used in~\cite[Appendix]{gh} to show that the
cyclotomic trace map is multiplicative. Since the Bass completion
of~\cite[Def.~6.4]{thomasontrobaugh} is functorial, we obtain a
commutative diagram of natural transformations 
$$\xymatrix{
{ K^Q(A) } \ar[r]^(.45){\tr} \ar[d]^{c} &
{ \TC(A;p) } \ar[d]^{c} \cr
{ K^B(A) } \ar[r]^(.47){\tr^B} &
{ \TC^B(A;p) } \cr
}$$
where the lower horizontal map is the map of Bass completed theories
induced by the cyclotomic trace, and where, by
Cor.~\ref{TCbasscomplete}, the right-hand vertical map is a weak
equivalence. This gives the desired extension of the cyclotomic trace
map to a map from the non-connective Bass completed $K$-theory
spectrum to the Bass completed topological cyclic homology
spectrum. In the following, we do not distinguish the topological
cyclic homology spectrum and its Bass completion and we write
$$\tr \colon K(A) \to \TC(A;p)$$
for the lower horizontal map in the diagram above. 

\section{The relative theorem}\label{relativesection}

The proofs of Thms.~\ref{relative} and~\ref{relativetheorem} are based
on the description in~\cite[Sect.~2]{h1} of the topological Hochschild
$\mathbb{T}$-spectrum of split square zero extension of rings. We
briefly recall this description.

We let $A$ be a unital associative ring, let $I \subset A$ be a
two-sided square zero ideal with quotient ring $B = A/I$ and assume
that the canonical projection of $A$ onto $B$ admits a ring
section. In this case, we recall from~\cite[Prop.~2.1]{h3} that there
is a wedge decomposition of $\mathbb{T}$-spectra 
$$\bigvee_{r \geqslant 1}T(B \ltimes I;r) \xto{\sim} T(A,I),$$
where the wedge sum ranges over the set of positive integers. We
remark that the spectrum $T(B \ltimes I;r)$ was denoted $T_r(B \oplus I)$ in
op.~cit. We write the induced direct sum decomposition of equivariant
homotopy groups as
$$\bigoplus_{r \geqslant 1} \TR_q^n(B \ltimes I;r;p) \xto{\sim}
\TR_q^n(A,I;p).$$
The Frobenius map
$\smash{ F \colon \TR_q^n(A,I;p) \to \TR_q^{n-1}(A,I;p) }$ preserves
the direct sum decomposition while the restriction map
$\smash{ R \colon \TR_q^n(A,I;p) \to \TR_q^{n-1}(A,I;p) }$ maps the
summand indexed by the positive integer $r$ divisible by $p$ to the
summand indexed by $r/p$ and annihilates the remaining
summands. Moreover, it follows from~\cite[Thm.~2.2]{hm} that there is
a long-exact sequence 
$$\cdots \to \mathbb{H}_q(C_{p^{n-1}},T(B \ltimes I;r)) \to
\TR_q^n(B \ltimes I;r;p) \xto{R}
\TR_q^{n-1}(B \ltimes I;r/p;p) \to \cdots$$
where the right-hand group is understood to be zero, if $p$ does not
divide $r$. The left-hand group is the $q$th group homology of
$C_{p^{n-1}}$ with coefficients in the $T(B \ltimes I;r)$ and is the abutment
of the strongly convergent spectral sequence
$$E_{s,t}^2 = H_s(C_{p^{n-1}},\TR_t^1(B \ltimes I;r;p)) \Rightarrow
\mathbb{H}_{s+t}(C_{p^{n-1}},T(B \ltimes I;r)).$$
It follows immediately from the definition of the
$\mathbb{T}$-spectrum $T(B \ltimes I;r)$ that the homotopy groups
$\TR_t^1(B \ltimes I;r;p) = \pi_tT(B \ltimes I;r)$ are zero, for $t < r-1$. Hence, the
spectral sequence and long-exact sequence above imply that the
restriction map
$$R \colon \TR_q^n(B \ltimes I;r;p) \to \TR_q^{n-1}(B \ltimes I;r/p;p)$$
is an isomorphism, for $q < r-1$, and an epimorphism, for $q =
r-1$. In particular, for every $n$ and $q$, only finitely many
summands in the direct sum decomposition of the group $\TR_q^n(A,I;p)$
are non-zero. 

\begin{lemma}\label{proabeliangrouplemma}Let $p$ be a prime number,
and let $X$ be a spectrum whose homotopy groups are $p$-primary
torsion groups of bounded exponent. Then, for all integers $q$, the
canonical map defines an isomorphism of pro-abelian groups
$$\pi_q(X) \xto{\sim} \{ \pi_q(X,\Z/p^v\Z) \}.$$
\end{lemma}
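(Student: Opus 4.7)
The plan is to reduce the claim to the universal coefficient short exact sequence for mod-$p^v$ homotopy
$$0 \to \pi_q(X)/p^v\pi_q(X) \to \pi_q(X,\Z/p^v\Z) \to \pi_{q-1}(X)[p^v] \to 0$$
arising from the cofibre sequence $S \xto{p^v} S \to S/p^v$, and then to control the two outer pro-systems using the boundedness hypothesis. The canonical map $\pi_q(X) \to \pi_q(X,\Z/p^v\Z)$ factors as the canonical surjection onto $\pi_q(X)/p^v\pi_q(X)$ followed by the inclusion into $\pi_q(X,\Z/p^v\Z)$. These sequences are natural in the map of Moore spectra $S/p^{v+1} \to S/p^v$, so they assemble into a strict short exact sequence of pro-abelian groups in $v$, giving a long exact sequence in the pro-category.

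Choose integers $N$ and $M$ so that $p^N$ annihilates $\pi_q(X)$ and $p^M$ annihilates $\pi_{q-1}(X)$; both exist by the bounded-exponent hypothesis. For $v \geqslant N$, the group $\pi_q(X)/p^v\pi_q(X)$ equals $\pi_q(X)$ and the transition maps are the identity, so the pro-system $\{\pi_q(X)/p^v\pi_q(X)\}$ is pro-isomorphic to the constant pro-system $\pi_q(X)$. The delicate point is the identification of the transition maps in the right-hand pro-system: a short diagram chase with the map of cofibre sequences whose first two vertical maps are multiplication by $p$ and the identity shows that the map $\pi_{q-1}(X)[p^{v+1}] \to \pi_{q-1}(X)[p^v]$ is multiplication by $p$. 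Hence the $M$-fold composite of transition maps is multiplication by $p^M$, which vanishes on $\pi_{q-1}(X)$, and the pro-system $\{\pi_{q-1}(X)[p^v]\}$ is pro-zero.

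Feeding these identifications into the long exact pro-sequence, the right-hand error term contributes nothing and the left-hand term is pro-isomorphic to the constant system $\pi_q(X)$, so the composite
$$\pi_q(X) \to \{\pi_q(X)/p^v\pi_q(X)\} \to \{\pi_q(X,\Z/p^v\Z)\}$$
is an isomorphism of pro-abelian groups, as desired. The only genuine obstacle is the bookkeeping for the Bockstein transition maps on the right-hand term; once those are correctly identified as multiplication by $p$, the finite exponent hypothesis forces the pro-vanishing mechanically.
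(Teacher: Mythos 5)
Your proof is correct and follows essentially the same route as the paper: both exploit the universal coefficient short exact sequence, observe that $\{\pi_q(X)\otimes\Z/p^v\Z\}$ is pro-constant once $v$ exceeds the exponent of $\pi_q(X)$, and show $\{\Tor(\pi_{q-1}(X),\Z/p^v\Z)\}=\{\pi_{q-1}(X)[p^v]\}$ is pro-zero by exhibiting a transition map that kills it. You spell out the Bockstein bookkeeping a little more explicitly (identifying the transition as multiplication by $p$), whereas the paper simply asserts that the structure map $\Tor(\pi_{q-1}(X),\Z/p^{2v}\Z)\to\Tor(\pi_{q-1}(X),\Z/p^v\Z)$ vanishes for $v$ past the exponent; the substance is the same.
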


\begin{proof}The coefficient sequences give a short-exact sequence of
pro-abelian groups 
$$0 \to \{ \pi_q(X) \otimes \Z/p^v\Z \} \to \{ \pi_q(X,\Z/p^v\Z) \}
\to \{ \Tor(\pi_{q-1}(X),\Z/p^v\Z) \} \to 0$$
and the map of the statement factors through the canonical projection
$$\pi_q(X) \to \{ \pi_q(X) \otimes \Z/p^v\Z \}.$$
The latter map is an isomorphism of pro-abelian groups, since the
structure maps in the target pro-abelian group are isomorphisms, for
$v$ is strictly larger than the exponent of $\pi_q(X)$. Similarly, the
pro-abelian group on the right-hand side in the short-exact sequence
above is isomorphic to zero, since, for $v$ larger than the exponent
of $\pi_{q-1}(X)$, the structure map
$$\Tor(\pi_{q-1}(X),\Z/p^{2v}\Z) \to \Tor(\pi_{q-1}(X),\Z/p^v\Z)$$
is zero. This completes the proof.
\end{proof}

\begin{theorem}\label{TCrelative}Let $A$ be a unital associative ring
and $I \subset A$ a two-sided nilpotent ideal. Suppose that the prime
number $p$ is nilpotent in $A$. Then, for every integer $q$, the
canonical map defines an isomorphism of pro-abelian groups
$$\TC_q(A,I;p) \xto{\sim} \{ \TC_q^n(A,I;p) \}.$$
\end{theorem}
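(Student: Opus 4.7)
The plan is to reduce the theorem to the case of a split square-zero extension $A = B \ltimes I$, and then to exploit the wedge decomposition
$\bigvee_{r \geq 1} T(B \ltimes I;r) \xto{\sim} T(A,I)$
recalled above, together with Lemmas~\ref{TCTR} and~\ref{proabeliangrouplemma}, to show that the pro-system $\{ \TC_q^n(A,I;p) \}$ is essentially constant with value $\TC_q(A,I;p)$.

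For the first reduction, suppose $I^N = 0$. The filtration $A \supset I \supset I^2 \supset \cdots \supset I^N = 0$ yields a tower of square-zero extensions $A/I^{k+1} \to A/I^k$ with kernels $I^k/I^{k+1}$, and the induced fiber sequences of relative topological cyclic homology, combined with the five lemma for pro-abelian groups, reduce the theorem for $(A,I)$ to the corresponding statement for each square-zero pair $(A/I^{k+1}, I^k/I^{k+1})$. A further reduction to the split case $A = B \ltimes I$ is carried out by standard simplicial-resolution techniques: a general square-zero extension is replaced by a simplicial object of split square-zero extensions, and pro-isomorphisms are preserved under the resulting geometric realization.

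For the split case, the wedge decomposition yields
$$
\TR_q^n(A,I;p) \;=\; \bigoplus_{r \geq 1} \TR_q^n(B \ltimes I;r;p),
$$
and, for fixed $q$, only finitely many summands are non-zero. The Frobenius $F$ preserves this decomposition, while the restriction $R$ shifts the summand indexed by $r$ to the summand indexed by $r/p$ (annihilating it if $p \nmid r$) and is an isomorphism on summands with $r > q+1$. Feeding this information into the long exact sequence
$$
\cdots \to \TR_{q+1}^{n-1}(A,I;p) \to \TC_q^n(A,I;p) \to \TR_q^n(A,I;p) \xto{R-F} \TR_q^{n-1}(A,I;p) \to \cdots
$$
arising from the homotopy-equalizer description of $\TC^n$, one sees that the essential content of $\TC_q^n(A,I;p)$ is concentrated in the summands indexed by $r \leq q+1$, while higher-$r$ summands contribute only via the stable shifts controlled by the isomorphism property of $R$. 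The uniform bounded-torsion statement of Lemma~\ref{TCTR} rules out any $p$-divisible contribution from the tail of the sum, and a diagram chase delivers the desired essential constancy of the pro-system $\{\TC_q^n(A,I;p)\}$.

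The main obstacle is the interaction between $R$ and $F$ in the equalizer defining $\TC^n$: unlike $R$, the Frobenius $F$ preserves each $r$-summand and does not exhibit the shift structure that makes the restriction side of the pro-system so tractable. Verifying that, in spite of this, the pro-system $\{\TC_q^n(A,I;p)\}$ nevertheless stabilizes requires a careful diagram chase on the long exact sequence above, combining the uniform torsion bound of Lemma~\ref{TCTR}, the finiteness of the $r$-decomposition at each level, and Lemma~\ref{proabeliangrouplemma} to control the passage between integral and mod-$p^v$ coefficients. Essential constancy in the split square-zero case then propagates to the general nilpotent case via the reductions described in the second paragraph.
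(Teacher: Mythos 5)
Your reductions from the nilpotent case to the square-zero case (via the $I$-adic filtration) and from the square-zero case to the split square-zero case (via simplicial resolution) match the paper's, modulo the order of presentation. The gap is in the split square-zero case, which you leave essentially unresolved: you correctly identify the ``main obstacle'' that the Frobenius $F$ preserves each $r$-summand while only $R$ has the useful shift structure, so that a direct attack on the $R-F$ equalizer defining $\TC^n$ does not obviously terminate; but the proposed ``careful diagram chase'' is never exhibited and it is not clear that it exists. The paper avoids this obstacle entirely by a change of strategy that your proposal does not contain.

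Concretely, the paper first invokes the earlier theorem~\cite[Thm.~2.1.1]{gh3} that the map $\TC_q(A,I;p,\Z/p^v\Z) \to \{\TC_q^n(A,I;p,\Z/p^v\Z)\}$ is already known to be a pro-isomorphism, and combines this with Lemma~\ref{TCTR} and Lemma~\ref{proabeliangrouplemma} to reduce the statement to the purely algebraic claim that the single group $\TC_q(A,I;p)$ is a $p$-primary torsion group of bounded exponent. To prove this, it then computes $\TC(A,I;p)$ not as an $(R-F)$-equalizer at finite level but by first forming $\TF(A,I;p)=\holim_F\TR^n(A,I;p)$, which by the wedge decomposition is a product $\prod_r\TF(B\ltimes I;r;p)$ on which $R$ acts by shifting the $p$-adic index, and then taking the $R$-equalizer of $\TF$. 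This isolates the $R$-shift structure from the $F$-direction and reduces the bounded-torsion claim, via the cofibration sequence with $\mathbb{H}_{\boldsymbol{\cdot}}(C_{p^{n-1}},T(B\ltimes I;r))$ and the explicit computation of the transfer on $E^2$-terms, to a calculation of group-homology pro-abelian groups. Your proposal omits both the appeal to the mod-$p^v$ theorem of~\cite{gh3} and the passage to $\TF$; without one or the other, the wedge decomposition and the torsion bound of Lemma~\ref{TCTR} do not by themselves force the pro-system $\{\TC_q^n(A,I;p)\}$ to stabilize.
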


\begin{proof}We consider the diagram of canonical maps of pro-abelian groups
$$\xymatrix{
{ \TC_q(A,I;p) } \ar[r] \ar[d] &
{ \{ \TC_q^n(A,I;p) \} } \ar[d] \cr
{ \{ \TC_q(A,I;p,\Z/p^v\Z) \} } \ar[r] &
{ \{ \TC_q^n(A,I;p,\Z/p^v\Z) \}. } \cr
}$$
We have previously proved in~\cite[Thm.~2.1.1]{gh3} that the the lower
horizontal map is an isomorphism. Moreover, by Lemma~\ref{TCTR}, the
groups $\TC_q^n(A,I;p)$ are $p$-primary torsion groups of bounded
exponent, and hence Lemma~\ref{proabeliangrouplemma} shows that the
right-hand vertical map is an isomorphism. Therefore, the statement of
the theorem is equivalent to the statement the left-hand vertical map
is an isomorphism which, in turn, is equivalent to the statement that
the groups $\TC_q(A,I;p)$ are $p$-primary torsion groups of bounded
exponent.

We first assume that $I \subset A$ is a square zero ideal and that the
canonical projection of $A$ onto the quotient ring $B = A/I$ admits a
ring section. We use the wedge decomposition of $T(A,I)$ which we
recalled above to give a formula for the spectrum $\TC(A,I;p)$. First,
for the homotopy limit with respect to $F$, we find
$$\begin{aligned}
\TF(A,I;p) {} & = \holim_F \TR^n(A,I;p) \xleftarrow{\sim}
\holim_F \bigvee_{r \geqslant 1} \TR^n(B \ltimes I;r;p) \cr
{} & \xto{\sim} \prod_{r \geqslant 1} \holim_F \TR^n(B \ltimes I;r;p)
= \prod_{r \geqslant 1} \TF(B \ltimes I;r;p), \cr
\end{aligned}$$
where the second map is a weak equivalence since, for every $n$ and
$q$, the homotopy groups $\TR_q^n(B \ltimes I;r;p)$ are non-zero for only
finitely many $r$. We rewrite the product on the right-hand side as
$$\prod_{r \geqslant 1} \TF(B \ltimes I;r;p) = \prod_{j \in I_p} \prod_{v
  \geqslant 0} \TF(B \ltimes I;p^{v-1}j;p),$$
where $I_p$ denotes the set of positive integers not divisible by
$p$. Then the restriction map takes the factor indexed by $(j,v)$ to
the factor indexed by $(j,v-1)$. Hence, taking the homotopy equalizer
of the restriction map and the identity map, we obtain a weak
equivalence
$$\TC(A,I;p) \simeq \prod_{j \in I_p} \holim_R \TF(B \ltimes I;p^{v-1}j;p).$$
We argued in the discussion preceeding Thm.~\ref{TCrelative} that the
restriction map
$$R \colon \TR_q^n(B \ltimes I;p^{v-1}j;p) \to \TR_q^{n-1}(B \ltimes I;p^{v-2}j;p)$$
is an isomorphism, if $q+1 < p^{v-1}j$. Hence, the Milnor short-exact
sequence for the homotopy groups of a homotopy limit shows that, for
$1 \leqslant j \leqslant q+1$, the canonical projection induces an
isomorphism
$$\pi_q(\holim_R \TF(B \ltimes I;p^{v-1}j;p)) \xto{\sim} \TF_q(B \ltimes I;p^{s-1}j;p),$$
where $s = s_p(q,j)$ is the unique integer such that $p^{s-1}j
\leqslant q+1 < p^sj$, and that for $q+1 < j$, the homotopy group
on the left-hand side vanishes. Therefore, to show that $\TC_q(A,I;p)$
is a $p$-primary torsion group of bounded exponent, it suffices to
show that, for all integers $q$ and $s \geqslant 1$ and all $j \in
I_p$, $\TF_q(B \ltimes I;p^{s-1}j;p)$ is a $p$-primary torsion groups of
bounded exponent. Now, from~\cite[Thm.~2.2]{hm1}, we have the
following cofibration sequence of spectra
$$\mathbb{H}_{\boldsymbol{\cdot}}(C_{p^{n-1}},T(B \ltimes I;p^{s-1}j)) \to
\TR^n(B \ltimes I;p^{s-1}j;p) \xto{R}
\TR^{n-1}(B \ltimes I;p^{s-2}j;p),$$
and taking homotopy limits with respect to the Frobenius maps, we
obtain the following cofibration sequence of spectra
$$\holim_F\mathbb{H}_{\boldsymbol{\cdot}}(C_{p^{n-1}},T(B \ltimes I;p^{s-1}j)) \to
\TF(B \ltimes I;p^{s-1}j;p) \xto{R}
\TF(B \ltimes I;p^{s-2}j;p).$$
Hence, by induction on $s \geqslant 1$, it will suffice to prove that,
for all $s \geqslant 1$ and $j \in I_p$, the homotopy groups of the
homotopy limit on the left-hand term are $p$-primary torsion groups of
bounded exponent. We recall the spectral sequence
$$E_{s,t}^2 = H_s(C_{p^{n-1}},\TR_t^1(B \ltimes I;p^{s-1}j;p)) \Rightarrow
\mathbb{H}_{s+t}(C_{p^{n-1}},T(B \ltimes I;p^{s-1}j;p));$$
see~\cite[Sect.~4]{h7} for a detailed discussion. Since the spectral
sequence induces a finite filtration of the abutment, we obtain
a spectral sequence of pro-abelian groups
$$\{ E_{s,t}^2 \} = \{ H_s(C_{p^{n-1}},\TR_t^1(B \ltimes I;p^{s-1}j;p)) \} 
\Rightarrow \{ \mathbb{H}_{s+t}(C_{p^{n-1}},T(B \ltimes I;p^{s-1}j;p)) \},$$
where the pro-abelian groups are indexed by integers $n \geqslant 1$,
and where the structure maps in the pro-abelian groups are the
Frobenius maps. On $E^2$-terms, the Frobenius map induces the transfer
map in group homology
$$F \colon H_s(C_{p^{n-1}},\TR_t^1(B \ltimes I;p^{s-1}j;p)) \to
H_s(C_{p^{n-2}},\TR_t^1(B \ltimes I;p^{s-1}j;p))$$
which is readily evaluated~\cite[Lemma~6]{h7}. The result is that
there are isomorphisms of pro-abelian groups  
$$\{ E_{s,t}^2 \} \cong \begin{cases}
\TR_t^1(B \ltimes I;p^{s-1}j;p) & (\text{$s = 0$ or $s$ odd)} \cr
0 & (\text{otherwise}). \cr
\end{cases}$$
It follows that, for all integers $q$, $\{
\mathbb{H}_q(C_{p^{n-1}},T(B \ltimes I;p^{s-1}j)) \}$ is isomorphic to a
constant pro-abelian group and that this constant pro-abelian group is
a $p$-primary torsion group of bounded exponent. But then the
canonical map
$$\pi_q(\holim_F
\mathbb{H}_{\boldsymbol{\cdot}}(C_{p^{n-1}},T(B \ltimes I;p^{s-1}j))) \to
\{ \mathbb{H}_q(C_{p^{n-1}},T(B \ltimes I;p^{s-1}j)) \}$$
is an isomorphism of pro-abelian groups, and hence, the left-hand
group is a $p$-primary torsion group of bounded exponent as
desired. This completes the proof of the theorem in the case where $A$
is a split square zero extension.

We next let $I \subset A$ be any square zero ideal and show that, for
all integers $q$, the pro-abelian group $\{ \TC_q^n(A,I;p) \}$ is
isomorphic to a constant pro-abelian group. This implies that, for all
integers $q$, the map of the statement is an isomorphism of
pro-abelian groups. We choose a weak equivalence of simplicial rings
$$\epsilon_{A/I} \colon A/I[-] \to A/I$$
such that, for all $k \geqslant 0$, the ring $A/I[k]$ is a free unital
associative ring. Then the map $\epsilon_A \colon A[-] \to A$ defined
by the pull-back diagram of simplicial rings
$$\xymatrix{
{ A[-] } \ar[r] \ar[d]^{\epsilon_A} &
{ A/I[-] } \ar[d]^{\epsilon_{A/I}} \cr
{ A } \ar[r] &
{ A/I }
}$$
is again a weak equivalence. In this case, there is a spectral sequence
$$E_{s,t}^1 = \TC_t^n(A[s],I;p) \Rightarrow
\TC_{s+t}^n(A,I;p),$$
for every integer $n \geqslant 1$. Since the filtration of the group
in the abutment is finite, we obtain a spectral sequence of
pro-abelian groups
$$\{ E_{s,t}^1 \} = \{ \TC_t^n(A[s],I;p) \} \Rightarrow
\{ \TC_{s+t}^n(A,I;p) \}.$$
Moreover, since $A/I[k]$ is a free unital associative ring, the
canonical projection from $A[k]$ onto $A/I[k]$ admits a ring
section. Therefore, by the case considered earlier, we conclude that,
for all integers $s$ and $t$, the pro-abelian group $\{ E_{s,t}^1 \}$
is isomorphic to a constant pro-abelian group. This implies that, for
all integers $q$, the pro-abelian group $\{ \TC_q^n(A,I;p) \}$ is
isomorphic to a constant pro-abelian group. Indeed, the category of
constant pro-abelian groups is a full abelian subcategory of the
abelian category of all pro-abelian groups. This completes the proof
in the case where $I \subset A$ is a square zero ideal.

Finally, we show by induction on $m \geqslant 2$ that the map of the
statement is an isomorphism of pro-abelian groups, if $I \subset A$ is
a two-sided ideal with $I^m = 0$. The case $m = 2$ was proved
above. So let $m > 2$ and assume that the statement has been proved
for smaller $m$. We consider the long-exact sequence
$$\cdots \to
\TC_q(A,I^{m-1};p) \to
\TC_q(A,I;p) \to
\TC_q(A/I^{m-1},I/I^{m-1};p) \to \cdots.$$
By induction, the right and left-hand groups both are $p$-torsion
groups of bounded exponents, and therefore, so is the middle
group. This completes the proof.
\end{proof}

\begin{proof}[Proof of Thm.~\ref{relative}]The arithmetic
square~\cite[Prop.~2.9]{bousfield} gives rise to the following
long-exact sequence where the two products range over all prime
numbers:
$$\xy
(-47,0)*{\cdots};
(-29,0)*{K_q(A,I)};
(0,4.8)*{\prod_{\ell} K_q(A,I;\Z_{\ell})};
(0,-5)*{K_q(A,I) \otimes \Q};
(0,0)*{\oplus};
(38,0)*{\prod_{\ell} K_q(A,I;\Z_{\ell}) \otimes \Q};
(66,0)*{\cdots};
{ \ar (-37,0)*{};(-44,0)*{};};
{ \ar (-14,0)*{};(-21,0)*{};};
{ \ar (21,0)*{};(14,0)*{};};
{ \ar (62,0)*{};(55,0)*{};};
\endxy$$
The theorem of Goodwillie~\cite[Main Thm.]{goodwillie} identifies
$K_q(A,I) \otimes \Q$ with the relative negative cyclic homology group
$\HC_q^{-}(A \otimes \Q,I \otimes \Q)$ which, in turn, is zero, since
$p$ is nilpotent in $A$. Similarly, McCarthy's theorem~\cite[Main
Thm.]{mccarthy1} shows that, for every prime number $\ell$, the
cyclotomic trace map induces an isomorphism
$$K_q(A,I;\Z_{\ell}) \xto{\sim} \TC_q(A,I;\ell,\Z_{\ell}).$$
Since $p$ is nilpotent in $A$, this group vanishes, for $\ell \neq
p$. Moreover, Thm.~\ref{TCrelative} implies that the completion map
$\TC_q(A,I;p) \to \TC_q(A,I;p,\Zp)$ is an isomorphism and that the
common group is a $p$-primary torsion group of bounded exponent. We
conclude that the cyclotomic trace is an isomorphism
$$K_q(A,I) \xto{\sim} \TC_q(A,I;p)$$
and that $K_q(A,I)$ is a $p$-primary torsion group of bounded exponent
as stated.
\end{proof}

\begin{proof}[Proof of Thm.~\ref{relativetheorem}]The map of the
statement is equal to the composition
$$K_q(A,I) \to \TC_q(A,I;p) \to \{ \TC_q^n(A,I;p) \}$$
of the cyclotomic trace map and the canonical map. We saw in the
proof of Thm.~\ref{relative} above that the former map is an
isomorphism, and we proved in Thm.~\ref{TCrelative} that the latter
map is an isomorphism. The theorem follows.
\end{proof}

\section{The bi-relative theorem}\label{birelativesection}

In this section, we prove Thms.~\ref{birelative}
and~\ref{birelativetheorem} of the introduction. We view the ideal
$I$ as an associative ring without unit and form the associated
associative ring with unit $\Z \ltimes I$ defined to be the product
abelian group $\Z \times I$ with multiplication
$$(a,x) \cdot (a',x') = (aa',ax'+xa'+xx').$$
We recall that Suslin has proved in~\cite[Thm.~A]{suslin4} that, if
$\Tor_q^{\Z \ltimes I}(\Z,\Z)$ vanishes, for all $q > 0$, then the
bi-relative group $K_q(A,B,I)$ vanishes, for all $q$. We prove the
following result in a similar manner.

\begin{theorem}\label{suslintheorem}Let $f \colon A \to B$ be a map of
unital associative rings, let $I \subset A$ be a two-sided ideal and
assume that $f \colon I \to f(I)$ is an isomorphism onto a two-sided
ideal of $B$. Suppose that the pro-abelian group $\{ \Tor_q^{\Z
  \ltimes (I^m)}(\Z,\Z) \}$ is zero, for all positive integers
$q$. Then the canonical map
$$K_q(A,B,I) \to \{ K_q(A/I^m,B/I^m,I/I^m) \}$$
is an isomorphism of pro-abelian groups, for all integers $q$.
\end{theorem}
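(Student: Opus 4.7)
The plan is to reduce the theorem to a pro-vanishing statement via the natural long-exact sequence relating the various bi-relative $K$-theories, and then to establish that pro-vanishing by adapting Suslin's proof of excision in [suslin4] to the pro-category.

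First, for each positive integer $m$, the maps of ideals $I^m \hookrightarrow I$ and $f(I)^m \hookrightarrow f(I)$ assemble into a map of fiber sequences $K(A,I^m) \to K(A,I) \to K(A/I^m,I/I^m)$ and the analogue over $B$; here I use the elementary fact that $f \colon I^m \to f(I)^m = f(I^m)$ is again an isomorphism of ideals. Taking vertical fibers I obtain a fiber sequence of spectra
$$K(A,B,I^m) \to K(A,B,I) \to K(A/I^m,B/I^m,I/I^m),$$
whose long-exact sequences of homotopy groups assemble, as $m$ varies, into a long-exact sequence of pro-abelian groups
$$\cdots \to \{K_{q+1}(A/I^m,B/I^m,I/I^m)\} \to \{K_q(A,B,I^m)\} \to K_q(A,B,I) \to \{K_q(A/I^m,B/I^m,I/I^m)\} \to \cdots$$
in which $K_q(A,B,I)$ appears as a constant pro-abelian group. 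Hence the conclusion of the theorem is equivalent to the vanishing of $\{K_q(A,B,I^m)\}$ as a pro-abelian group for every integer $q$.

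Second, I would establish this pro-vanishing by revisiting the proof of Suslin's theorem [suslin4, Thm.~A]. That argument attaches to the ideal $I$ a functorial spectral sequence (or finite filtration in each total degree) whose $E^2$-terms are built out of the groups $\Tor_*^{\Z \ltimes I}(\Z,\Z)$ and which converges to $K_*(A,B,I)$; vanishing of the Tor groups in positive degrees forces vanishing of the abutment. The construction is functorial in the ideal, so applied to $I^m$ it yields a compatible system (indexed by $m$) of spectral sequences converging to $K_*(A,B,I^m)$ with $E^2$-pages given by the pro-abelian groups $\{\Tor_*^{\Z \ltimes I^m}(\Z,\Z)\}$. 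Because the hypothesis of the theorem asserts that these pro-Tor groups vanish in positive degrees, the $E^2$-page vanishes in the pro-category, and since the filtration on the abutment is finite in each total degree (so strong convergence passes to the pro-category, as used already in the proof of Thm.~\ref{TCrelative}), the pro-abelian group $\{K_q(A,B,I^m)\}$ vanishes for every $q$.

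Feeding this into the pro-long-exact sequence of the first paragraph completes the proof: the two pro-abelian groups flanking $K_q(A,B,I)$ vanish, so the canonical map $K_q(A,B,I) \to \{K_q(A/I^m,B/I^m,I/I^m)\}$ is an isomorphism of pro-abelian groups.

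The main obstacle is the second step, namely the verification that Suslin's excision argument survives the passage to the pro-category. Suslin's proof invokes a bar-type simplicial resolution of the non-unital ring $I$, and one must check that this resolution, applied uniformly to the tower of ideals $I^m$, produces a spectral sequence whose $E^2$-terms are computed by pro-Tor and whose convergence is compatible with the pro-structure in the sense needed to transfer $E^2$-pro-vanishing to abutment-pro-vanishing. This is essentially bookkeeping once the correct functorial construction is chosen, but it is the point at which any genuine work lies.
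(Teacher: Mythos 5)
Your first paragraph is exactly the paper's argument: the long-exact sequence of pro-abelian groups
$$\cdots \to \{ K_q(A,B,I^m) \} \to K_q(A,B,I) \to \{ K_q(A/I^m,B/I^m,I/I^m) \} \to \cdots$$
reduces the theorem to the pro-vanishing of $\{K_q(A,B,I^m)\}$, and the paper then concludes by invoking the proof of its earlier result~\cite[Thm.~1.1]{gh4}, which is precisely the pro-version of Suslin's excision theorem that you describe wanting. So the structure of your proof matches the paper's.

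One caveat about the way you sketch the second step: Suslin's argument does not produce a spectral sequence with $E^2$-page $\Tor_*^{\Z \ltimes I}(\Z,\Z)$ converging directly to $K_*(A,B,I)$, and you should not expect the pro-version to take that form either. Suslin's proof runs through Volodin-type models of relative $K$-theory, homology of general linear groups, and a delicate excision-in-homology statement in which the Tor-unitality hypothesis enters as a condition on resolutions rather than as the $E^2$-page of a single spectral sequence with bi-relative $K$-theory as abutment. The pro-adaptation is correspondingly nontrivial and requires the non-standard homological algebra of pro-modules (pseudo-free modules, special homomorphisms) developed in~\cite[Sect.~1]{gh4} and recapitulated in Section~\ref{birelativesection} of this paper (Lemma~\ref{tormapvanishing}, Prop.~\ref{iszero}); your phrase ``essentially bookkeeping'' understates what is involved, and a reader who tried to build the spectral sequence you describe would run into trouble. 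You do correctly locate the gap and flag it as the place where the genuine work lies, which is the honest assessment; the paper has the advantage of being able to cite~\cite[Thm.~1.1]{gh4} for exactly this.
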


\begin{proof}We have a long-exact sequence of pro-abelian groups
$$\cdots \to \{ K_q(A,B,I^m) \} \to K_q(A,B,I) \to \{
K_q(A/I^m,B/I^m,I/I^m) \} \to \cdots$$
Since the pro-abelian group $\{ \Tor_q^{\Z \ltimes (I^m) }(\Z,\Z) \}$
is zero, for all $q > 0$, we conclude from the proof
of~\cite[Thm.~1.1]{gh4} that the left-hand pro-abelian group is zero,
for all integers $q$. The theorem follows.
\end{proof}

\begin{remark}\label{question}We do not know whether the canonical map
$$K_q(A,B,I) \to \{ K_q(A/I^m,B/I^m,I/I^m) \}$$
is an isomorphism of pro-abelian groups, if the assumption that
the pro-abelian groups 
$\{ \Tor_q^{\Z \ltimes ( I^m)}(\Z,\Z) \}$ be zero is omitted.
\end{remark}

We to show in Prop.~\ref{proexcision} below that hypotheses of
Thm.~\ref{suslintheorem} are satisfied, if the ideal $I$ can be
embedded as an ideal of a free unital $\Fp$-algebra. We first discuss
a slight generalization of the non-standard homological algebra in
Suslin's paper~\cite{suslin4}. Let $k$ be a commutative ring, and let
$\{I_m\}$ be a pro-non-unital associative $k$-algebra. We define a
\emph{left $\{I_m\}$-module} to be a pro-$k$-module $\{ M_m \}$ and,
for every $m \geqslant 1$, a left $I_m$-module structure on $M_m$ such 
that, for all $m \geqslant n \geqslant 1$, the diagram 
$$\xymatrix{
{ I_m \otimes_k M_m } \ar[r] \ar[d] &
{ M_m } \ar[d] \cr
{ I_n \otimes_k M_n } \ar[r] &
{ M_n, } \cr
}$$
where the horizontal maps are the module structure maps and the
vertical maps are induced from the structure maps in the
pro-$k$-modules $\{I_m\}$ and $\{M_m\}$, commutes. A
\emph{homomorphism} from the left $\{I_m\}$-module $\{M_m\}$ to the
left $\{I_m\}$-module $\{M_m'\}$ is defined to be a strict map of pro-$k$-modules $f
\colon \{ M_m \} \to \{M_m'\}$ such that, for all $m \geqslant 1$, the
map $f_m \colon M_m \to M_m'$ is $I_m$-linear. An \emph{extended left
$\{ I_m \}$-module} is a left $\{ I_m \}$-module of the form
$\{ I_m \otimes_k L_m \}$, where $\{ L_m \}$ is a pro-$k$-module. The
left $\{I_m\}$-module $\{P_m\}$ is defined to be \emph{pseudo-free} if
there exists an isomorphism of left $\{ I_m \}$-modules $\varphi
\colon \{ I_m \otimes_k L_m \} \to \{ P_m \}$ from an extended left $\{
I_m \}$-module such that, for all $m \geqslant 1$, $L_m$ is a free
$k$-module. If $\{ P_m \}$ is a pseudo-free left $\{ I_m \}$-module,
we say that the homomorphism $f \colon \{ P_m \} \to \{ M_m \}$ to the
left $\{ I_m \}$-module $\{ M_m \}$ is \emph{special}, if there exists
a strict map of pro-$k$-modules $g \colon \{ L_m \} \to \{ M_m \}$
such that, for all $m \geqslant 1$, the diagram
$$\xymatrix{
{ I_m \otimes_k L_m } \ar[r]_(.6){\sim}^(.6){\varphi_m}
\ar[d]^{\id \otimes g_m} & 
{ P_m } \ar[d]^{f_m} \cr
{ I_m \otimes_k M_m } \ar[r]^(.58){\mu_m} &
{ M_m } \cr
}$$
commutes.

\begin{lemma}\label{tormapvanishing}Let $k$ be a commutative ring and
$\{ I_m \}$ a pro-non-unital associative $k$-algebra. Then, for all $q
\geqslant 0$, the special homomorphism $f \colon \{ P_m \} \to \{ M_m
\}$ from the pseudo-free left $\{ I_m \}$-module $\{ P_m \}$ to the
left $\{ I_m \}$-module $\{ M_m \}$ induces the zero-map of
pro-$k$-modules
$$\{ \Tor_q^{k \ltimes I_m}(k, P_m) \} \to
\{ \Tor_q^{k \ltimes I_m}(k, M_m) \}.$$
\end{lemma}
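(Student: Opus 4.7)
The plan is to lift Suslin's bar-complex null-homotopy~\cite{suslin4} to the pro-category by performing the construction level-wise. Let $\varphi \colon \{I_m \otimes_k L_m\} \to \{P_m\}$ witness pseudo-freeness and $g \colon \{L_m\} \to \{M_m\}$ witness that $f$ is special, so that the defining square of the statement commutes at every level $m$. Since $\varphi$ is an isomorphism of left $\{I_m\}$-modules, the induced map $\varphi_*$ of pro-$\Tor$ groups is an isomorphism, and it therefore suffices to show that $f \circ \varphi$ induces the zero pro-map
$$\{\Tor_q^{k \ltimes I_m}(k, I_m \otimes_k L_m)\} \to \{\Tor_q^{k \ltimes I_m}(k, M_m)\}$$
in every degree $q \geqslant 0$.

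For each $m$, I would compute these $\Tor$-groups using the normalized bar complex $\bar{B}_n(N) = I_m^{\otimes_k n} \otimes_k N$. Under the extended identification $\bar{B}_n(I_m \otimes_k L_m) = I_m^{\otimes_k (n+1)} \otimes_k L_m$, define $h_m \colon \bar{B}_n(I_m \otimes_k L_m) \to \bar{B}_{n+1}(M_m)$ by $h_m = \id_{I_m^{\otimes (n+1)}} \otimes g_m$. A routine bookkeeping with the bar differential then gives $d_{M_m} \circ h_m - h_m \circ d_{I_m \otimes L_m} = (-1)^n (f_m \circ \varphi_m)$: the interior collapse faces on the two sides cancel term by term, and only the terminal action face of $d_{M_m}$ survives, producing $x_1 \otimes \cdots \otimes x_n \otimes (x_{n+1} \cdot g_m(\ell))$, which is $(f_m \circ \varphi_m)$ on the generator by the definition of speciality.

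Because $g$ is a strict map of pro-$k$-modules and the bar construction is functorial in $\{I_m\}$, the family $\{h_m\}$ is itself a strict map of pro-chain-complexes, hence a null-homotopy in the pro-category. Taking homology then shows that the pro-map induced by $f \circ \varphi$ vanishes in every degree $q \geqslant 0$, and composing with $\varphi_*^{-1}$ completes the proof.

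The main step to verify carefully is the single-level bar-differential computation, including the sign convention; this is essentially Suslin's argument applied at each level. The only genuinely pro-theoretic point is the elementary observation that strict pro-maps of chain complexes automatically assemble into strict null-homotopies once the level-wise homotopies are constructed functorially, so that no cofinal reindexing is required. The freeness of $L_m$ does not enter this lemma and will be used only later, in the construction of pseudo-free resolutions in the proof of Prop.~\ref{proexcision}.
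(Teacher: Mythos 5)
There is a genuine gap. Your argument computes $\Tor_q^{k \ltimes I_m}(k,-)$ with the normalized algebraic bar complex $\bar B_n(N) = I_m^{\otimes_k n}\otimes_k N$, but over an arbitrary commutative ring $k$ this complex does not in general compute the derived $\Tor$. The bar resolution $B(k\ltimes I_m,\,k\ltimes I_m,\,k)$ has terms $(k\ltimes I_m)\otimes_k I_m^{\otimes n}$, and these are projective (or even flat) over $k\ltimes I_m$ only when the $k$-modules $I_m^{\otimes n}$ are; without that, the bar complex computes a ``relative'' Tor that receives a map from the genuine $\Tor$ but need not agree with it. Your chain null-homotopy (which is correct, with the sign $(-1)^{n+1}$) therefore kills the wrong group. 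This is not a hypothetical concern: the lemma is used in Prop.~\ref{changeofrings} with $k' = \Z$, and the relevant $I_m^{\alpha}$ are $\Fp$-modules, hence torsion and decidedly not flat over $\Z$. Your closing observation that ``the freeness of $L_m$ does not enter this lemma'' is actually a symptom of the problem: a correct proof must use it, since without a projectivity hypothesis on $L_m$ the statement is not expected to hold.

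The paper's proof is shorter and sidesteps the bar complex entirely. Speciality gives a factorization
$$f_m\circ\varphi_m \colon I_m\otimes_k L_m \xrightarrow{\;\iota_m\otimes\id\;} (k\ltimes I_m)\otimes_k L_m \xrightarrow{\;\tilde f_m\;} M_m,\qquad \tilde f_m((a,x)\otimes y)=ag_m(y)+xg_m(y).$$
Because $L_m$ is a \emph{free} $k$-module, $(k\ltimes I_m)\otimes_k L_m$ is a free $k\ltimes I_m$-module, so $\Tor_q^{k\ltimes I_m}(k,(k\ltimes I_m)\otimes_k L_m)=0$ for all $q>0$ and the factorization forces the map on $\Tor_q$, $q>0$, to vanish level by level; for $q=0$, the map $k\otimes_{k\ltimes I_m} I_m \to k\otimes_{k\ltimes I_m}(k\ltimes I_m)$ induced by the inclusion is zero because $I_m$ lies in the augmentation ideal. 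No pro-theoretic bookkeeping is needed, since the vanishing holds already at each index $m$, not merely after passing to a cofinal subsystem. If you wish to retain the bar-complex picture, you would have to replace the algebraic bar complex by its derived version (for instance the bar construction on Eilenberg--Mac\,Lane spectra, as in the proof of Prop.~\ref{changeofrings}), and then carry your homotopy through that machinery -- which is considerably more work than the two-line factorization argument above.
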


\begin{proof}We have a commutative diagram
$$\xymatrix{
{ I_m \otimes_k L_m } \ar[r]_(.6){\sim}^(.6){\varphi_m}
\ar[d]^{\iota_m \otimes \id} & 
{ P_m } \ar[d]^{f_m} \cr
{ (k \ltimes I_m) \otimes_k L_m } \ar[r]^(.65){\tilde{f}_m} &
{ M_m } \cr
}$$
where $\tilde{f}_m((a,x) \otimes y) = ag_m(y) + xg_m(y)$. For $q = 0$,
the map $\iota_m$ induces the zero-map $k \otimes_{k \ltimes I_m} I_m \to 
k \otimes_{k \ltimes I_m} (k \ltimes I_m)$, and for $q > 0$, 
$$\Tor_q^{k \ltimes I_m}(k,(k \ltimes I_m) \otimes L_m) = 
\Tor_q^{k \ltimes I_m}(k,k \ltimes I_m) \otimes L_m = 0.$$
The lemma follows.
\end{proof}

\begin{prop}\label{iszero}Let $k$ be a commutative ring, let
$\{ I_m \}$ be a pro-non-unital associative $k$-algebra and assume
that, for all $q > 0$, the pro-$k$-module
$$\{ \Tor_q^{k \ltimes I_m}(k,k) \}$$
is zero. Let $F_q$, $q \geqslant 0$, be functors from the category of
left $\{ I_m \}$-modules to the category of pro-abelian groups and
assume that the following holds:

{\rm (i)} If $\{ M_m[-] \} \to \{ M_m \}$ is an augmented
simplicial left $\{ I_m \}$-module such that the associated chain
complex of pro-abelian groups is exact, then there is a spectral
sequence of pro-abelian groups
$$E_{s,t}^1 = F_t(\{M_m[s]\}) \Rightarrow F_{s+t}(\{M_m\})$$
such that the edge-homomorphism $F_t(\{M_m[0]\}) \to F_t(\{M_m\})$ is
equal to the map induced by the augmentation $\epsilon \colon \{
M_m[0] \} \to \{M_m\}$. 

{\rm (ii)} If $f \colon \{ P_m \} \to \{ M_m \}$ is a special
homomorphism then, for all $q \geq 0$, the induced map of pro-abelian
groups $f_* \colon F_q(\{ P_m \}) \to F_q(\{ M_m \})$ is zero.

\noindent Then, for every pseudo-free left $\{I_m\}$-module $\{P_m\}$
and every $q \geqslant 0$, the pro-abelian group $F_q(\{P_m\})$ is
zero.
\end{prop}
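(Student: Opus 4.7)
I would proceed by induction on $q \geq 0$, with inductive hypothesis that $F_{q'}(\{Q_m\}) = 0$ for every pseudo-free left $\{I_m\}$-module $\{Q_m\}$ and every $0 \leq q' < q$. The inductive step rests on constructing, for any pseudo-free $\{P_m\} = \{I_m \otimes_k L_m\}$, a bar-type augmented simplicial resolution $\epsilon \colon \{P_m[\bullet]\} \to \{P_m\}$ satisfying: (a) each $\{P_m[s]\}$ is pseudo-free; (b) the augmentation $\epsilon$ is a special homomorphism; (c) the associated augmented chain complex is exact in pro-abelian groups. The natural candidate is
$$\{P_m[s]\} = \{I_m \otimes_k I_m^{\otimes(s+1)} \otimes_k L_m\} \qquad (s \geq 0),$$
viewed as an extended $\{I_m\}$-module via the leftmost $I_m$-factor, with face operators obtained by multiplying adjacent $I_m$-factors in the rightmost $(s+2)$-fold tensor power, and augmentation $\epsilon$ collapsing the two leftmost $I_m$-factors by multiplication. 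Property (a) is manifest (granted, as in the intended application with $k$ a field, that $I_m$ is free over $k$, so that the coefficient $I_m^{\otimes(s+1)} \otimes_k L_m$ is free over $k$), and (b) is satisfied with witness $g_m \colon I_m \otimes_k L_m \to P_m$ the tautological identification, since $\epsilon(x \otimes y \otimes l) = xy \otimes l = \mu_m(x \otimes g_m(y \otimes l))$.

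Granted (a)--(c), hypothesis (i) yields a spectral sequence of pro-abelian groups
$$E^1_{s,t} = F_t(\{P_m[s]\}) \Rightarrow F_{s+t}(\{P_m\})$$
whose edge homomorphism $F_q(\{P_m[0]\}) \twoheadrightarrow E^\infty_{0,q} \hookrightarrow F_q(\{P_m\})$ is induced by $\epsilon$, hence is zero by hypothesis (ii). This forces $E^\infty_{0,q} = 0$. For $s > 0$, the inductive hypothesis applied to the pseudo-free $\{P_m[s]\}$ gives $E^1_{s, q-s} = F_{q-s}(\{P_m[s]\}) = 0$, so every associated graded of the filtration on $F_q(\{P_m\})$ vanishes, and therefore $F_q(\{P_m\}) = 0$. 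The base case $q = 0$ is the same argument, with $(0,0)$ being the only possibly nonzero bidegree on $E^\infty$.

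The main obstacle is verifying property (c), which is precisely where the vanishing hypothesis $\{\Tor_q^{k \ltimes I_m}(k, k)\} = 0$ for $q > 0$ enters decisively. After stripping the outermost $I_m$-factor, the augmented chain complex reads
$$\cdots \to I_m^{\otimes 3} \otimes_k L_m \to I_m^{\otimes 2} \otimes_k L_m \to I_m \otimes_k L_m \to 0,$$
which, up to a shift, is the normalized bar complex of the non-unital algebra $\{I_m\}$ tensored over $k$ with the flat $k$-module $\{L_m\}$; its pro-homology is a shift of $\{\Tor_*^{k \ltimes I_m}(k, k)\} \otimes_k L_m$, via the standard identification of normalized bar homology with $\Tor$ over the unitalization. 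The vanishing hypothesis together with flatness of $L_m$ over $k$ therefore pins down the pro-homology of this complex, giving pro-exactness in positive degrees and identifying the augmented zeroth term with $\{P_m\}$. This is the key technical point; everything else is a formal spectral sequence argument of the type sketched above.
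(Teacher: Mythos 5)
Your overall strategy is the right one: induction on $q$, build a degreewise pseudo-free simplicial resolution whose augmentation is special, and run the spectral sequence of hypothesis (i) against hypothesis (ii). This is also, as far as the present paper reveals, the shape of the argument it defers to in [gh4, Prop.\ 1.7]. Your spectral-sequence bookkeeping (the edge map to $E^\infty_{0,q}$ is $\epsilon_*$, hence zero; the $E^1_{s,q-s}$ with $s>0$ die by induction; the base case $q=0$ works since only $(0,0)$ can survive) is correct, and you have put your finger on exactly where the hypothesis $\{\Tor_q^{k\ltimes I_m}(k,k)\}=0$ enters.

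There is, however, a concrete gap in the construction of the resolution: the object $\{P_m[s]\}=\{I_m^{\otimes(s+2)}\otimes_k L_m\}$ with face maps ``multiplying adjacent $I_m$-factors'' has no degeneracy operators, because $I_m$ has no unit to insert. You have only a \emph{semi}-simplicial object, whereas hypothesis (i) asks for an augmented \emph{simplicial} left $\{I_m\}$-module. The repair is to use the two-sided bar construction over the unitalization, $\{P_m[s]\}=\{I_m\otimes_k(k\ltimes I_m)^{\otimes s}\otimes_k P_m\}$, with degeneracies inserting $1\in k\ltimes I_m$ and with augmentation $\mu_m\colon I_m\otimes_k P_m\to P_m$ (one checks $\mu_m d_0=\mu_m d_1$ by associativity). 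This is a genuine simplicial extended $\{I_m\}$-module, the augmentation is special with witness $g_m=\id_{P_m}$, and its normalized chain complex is precisely your $\{I_m^{\otimes(s+2)}\otimes_k L_m\}$, so your homology computation carries over verbatim. Two smaller points: the phrase ``after stripping the outermost $I_m$-factor'' does not describe what you have displayed --- the displayed complex is already the full augmented normalized complex, and is the brutal truncation $\sigma_{\geqslant 1}$ of $\bar B(k,k\ltimes I_m,k)\otimes_k L_m$ shifted by one, with $H_n\cong\Tor^{k\ltimes I_m}_{n+1}(k,k)\otimes_k L_m$ since the differential $\bar B_1\to\bar B_0$ is zero; and the restriction you flag (needing $I_m$ free over $k$ so that the coefficient modules stay free) means you have only proved the proposition when $k$ is a field or $I_m$ is $k$-free. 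That suffices for the paper's application (where $k=\Fp$ in Prop.\ \ref{proexcision} via Prop.\ \ref{changeofrings}), but it falls short of the generality of the statement, so you should either restrict the statement or indicate how the general case reduces to this one.
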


\begin{proof}This is proved as in~\cite[Prop.~1.7]{gh4} by induction
on $q \geqslant 0$. The additional assumption in loc.~cit.~that, for
every left  $\{ I_m \}$-module $\{ M_m \}$, the pro-abelian group
$F_0(\{ M_m \})$ be zero is unnecessary. Indeed, the proof of the
induction step given in loc.~cit.~also proves the case $q = 0$. 
\end{proof}

We remark that, in Prop.~\ref{iszero}, the functors $F_q$ are not
assumed to be additive. 

Let $\alpha \colon k' \to k$ a ring homomorphism, and let
$\{ I_m \}$ be a pro-non-unital associative $k$-algebra. We define $\{
I_m^{\alpha} \}$ to be $\{ I_m \}$ considered as a pro-non-unital
associative $k'$-algebra via $\alpha$ and call it the
\emph{associated} pro-non-unital associative $k'$-algebra. Similarly,
if $\{ M_m \}$ is a left $\{ I_m \}$-module, we define $\{
M_m^{\alpha} \}$ to be $\{ M_m \}$ 
considered as a left $\{ I_m^{\alpha} \}$-module via $\alpha$ and call it the
\emph{associated} left $\{ I_m^{\alpha} \}$-module. Then the left $\{ I_m
\}$-module $\{ P_m \}$ is pseudo-free if and only if the associated
left $\{ I_m^{\alpha} \}$-module $\{ P_m^{\alpha} \}$ is
pseudo-free, and the homomorphism $f \colon \{ P_m \} \to \{ M_m
\}$ from the pseudo-free left $\{ I_m \}$-module $\{ P_m \}$ to the
left $\{ I_m \}$-module $\{ M_m \}$ is special if and only if the
associated  homomorphism $f^{\alpha} = f \colon \{ P_m^{\alpha} \} \to
\{ M_m^{\alpha} \}$ of the associated left $\{ I_m^{\alpha}
\}$-modules is special.

\begin{prop}\label{changeofrings}Let $\alpha \colon k' \to k$ be a map of
commutative rings, let $\{ I_m \}$ a pro-non-unital associative
$k$-algebra, and let $\{ I_m^{\alpha} \}$ be the associated
pro-non-unital associative $k'$-algebra. Assume that the
pro-$k$-module
$$\{ \Tor_q^{k \ltimes I_m}(k,k) \}$$
is zero, for all integers $q > 0$. Then, the pro-$k'$-module
$$\{ \Tor_q^{k' \ltimes I_m^{\alpha}}(k',k') \}$$
is zero, for all integers $q > 0$.
\end{prop}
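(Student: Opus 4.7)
The plan is to apply Proposition~\ref{iszero} to the pair $(k, \{I_m\})$ with the family of functors on left $\{I_m\}$-modules defined by
$$F_q(\{M_m\}) = \{\Tor_q^{k' \ltimes I_m^\alpha}(k', M_m^\alpha)\}.$$
The hypothesis required for this invocation is precisely the standing assumption of the present proposition. Supposing that conditions (i) and (ii) of Proposition~\ref{iszero} are verified, the conclusion applied to the pseudo-free left $\{I_m\}$-module $\{I_m\}$ (with $L_m = k$) gives $\{\Tor_q^{k' \ltimes I_m^\alpha}(k', I_m^\alpha)\} = 0$ for every $q \geq 0$. The long-exact sequence of $\Tor^{k' \ltimes I_m^\alpha}(k', -)$ associated with $0 \to I_m^\alpha \to k' \ltimes I_m^\alpha \to k' \to 0$, together with the fact that $k' \ltimes I_m^\alpha$ is free over itself, then yields the desired vanishing of $\{\Tor_q^{k' \ltimes I_m^\alpha}(k', k')\}$ for all $q > 0$.

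Condition (i) is routine, since the forgetful functor $(-)^\alpha$ preserves exactness of chain complexes of pro-abelian groups and the usual hyper-$\Tor$ spectral sequence is natural in $m$. The real work is condition (ii), which is a change-of-rings analogue of Lemma~\ref{tormapvanishing}. Given a special homomorphism $f \colon \{I_m \otimes_k L_m\} \to \{M_m\}$ with lift $g \colon \{L_m\} \to \{M_m\}$, the obstacle is that $L_m$, although free over $k$, is in general neither free nor flat over $k'$, so the extended module $(k \ltimes I_m) \otimes_k L_m$ used in Lemma~\ref{tormapvanishing} has no direct $k'$-analogue.

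The workaround is to introduce an auxiliary free $k'$-module. For each $m$, fix a $k$-basis $\{e_i\}$ of $L_m$ and let $L_m'$ denote the free $k'$-module on that set. Write $R_m' = k' \ltimes I_m^\alpha$. One defines $R_m'$-linear maps
$$\psi_m \colon I_m \otimes_k L_m \to R_m' \otimes_{k'} L_m', \quad \sum_i x_i e_i \mapsto \sum_i (0, x_i) \otimes e_i,$$
$$\tilde f_m \colon R_m' \otimes_{k'} L_m' \to M_m^\alpha, \quad (a, x) \otimes e_i \mapsto \alpha(a) g_m(e_i) + x \cdot g_m(e_i),$$
and verifies $\tilde f_m \circ \psi_m = f_m$. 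Since $L_m'$ is free over $k'$, one has $\Tor_q^{R_m'}(k', R_m' \otimes_{k'} L_m') = 0$ for $q > 0$, so $(\tilde f_m)_* = 0$ in positive degrees; in degree zero, $(\psi_m)_*$ vanishes because the augmentation $R_m' \to k'$ annihilates $I_m^\alpha$. Composing gives $(f_m)_* = 0$ in every degree. The basis-dependence of $\psi_m$ and $\tilde f_m$ and the lack of compatibility with the structure maps in $m$ are harmless, since the required vanishing is a levelwise statement about the strict pro-map $f$. The main obstacle is identifying this passage to the free $k'$-module on a $k$-basis of $L_m$; once that is in hand, the remaining verification is direct.
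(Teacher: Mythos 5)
Your proposal is correct and follows essentially the paper's own route: the same functors $F_q(\{M_m\}) = \{\Tor_q^{k' \ltimes I_m^{\alpha}}(k', M_m^{\alpha})\}$ fed into Prop.~\ref{iszero}, applied to the pseudo-free left $\{I_m\}$-module $\{I_m\}$, followed by the long exact sequence for $0 \to I_m^{\alpha} \to k' \ltimes I_m^{\alpha} \to k' \to 0$. Your explicit levelwise factorization through $R_m' \otimes_{k'} L_m'$ in condition (ii) is just an unwinding of the paper's observation that pseudo-freeness and specialness pass to the associated $\{I_m^{\alpha}\}$-modules (via the free $k'$-module on a $k$-basis of $L_m$), after which Lemma~\ref{tormapvanishing} over $k'$ gives the vanishing; the only other difference is that the paper verifies condition (i) in detail via the two-sided bar construction of Eilenberg--Mac\,Lane spectra, where you declare it routine.
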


\begin{proof}The short-exact sequence of left 
$k' \ltimes I_m^{\alpha}$-modules 
$$0 \to I_m^{\alpha} \to k' \ltimes I_m^{\alpha} \to k' \to 0$$
induces a long-exact sequence of pro-abelian groups
$$\cdots \to
\{ \Tor_q^{k' \ltimes I_m^{\alpha}}(k',I_m^{\alpha}) \} \to
\{ \Tor_q^{k' \ltimes I_m^{\alpha}}(k',k' \ltimes I_m^{\alpha}) \} \to
\{ \Tor_q^{k' \ltimes I_m^{\alpha}}(k',k') \} \to
\cdots$$
It shows that the boundary map is an isomorphism of pro-abelian groups
$$\{ \Tor_q^{k' \ltimes I_m^{\alpha}}(k',k') \} \xto{\sim}
\{ \Tor_{q-1}^{k' \ltimes I_m^{\alpha}}(k',I_m^{\alpha}) \},$$
for all $q > 0$. Therefore, the statement will follow from
Prop.~\ref{iszero}, once we show that the functors $F_q$, $q \geqslant
0$, from the category of left $\{I_m\}$-modules to the category of
pro-abelian groups defined by
$$F_q(\{M_m\}) =
\{ \Tor_q^{k' \ltimes I_m^{\alpha}}(k', M_m^{\alpha}) \}$$
satisfy the hypotheses~(i)--(ii) of loc.~cit. Indeed, considered as a
left $\{ I_m^{\alpha} \}$-module, $\{I_m^{\alpha}\}$ is a pseudo-free.

To prove hypothesis~(i), we let $\epsilon \colon \{M_m[-]\} \to
\{M_m\}$ be an augmented simplicial left $\{I_m\}$-module and consider
the bi-simplicial symmetric spectrum
$$X[s,t] = B(Hk',H(k' \ltimes I_m^{\alpha}), HM_m^{\alpha}[s])[t] 
{}  = Hk' \wedge H(k' \ltimes I_m^{\alpha})^{\wedge t} \wedge
HM_m^{\alpha}[s] $$
given by the two-sided bar-construction of the Eilenberg-Mac~Lane
spectra associated with the ring $k' \ltimes I_m^{\alpha}$ and the
left and right modules $k'$ and $M_m^{\alpha}[s]$. We recall that the
three possible ways of forming the geometric realization of $X[-,-]$
lead to the same result in the sense that there are canonical
isomorphisms
$$\vert [s] \mapsto \vert [t] \mapsto X[s,t] \vert \vert \xto{\sim}
\vert [n] \mapsto X[n,n] \vert \xleftarrow{\sim}
\vert [t] \mapsto \vert [s] \mapsto X[s,t] \vert \vert.$$
For the left-hand side, the skeleton filtration gives rise to a
spectral sequence that converges to the homotopy groups of the
geometric  realization. By~\cite[Thm.~2.1]{elmendorfkrizmandellmay},
we obtain the identification
$$E_{s,t}^1 = \pi_t(\vert [i] \mapsto X[s,i] \vert) 
= \Tor_t^{k' \ltimes I_m^{\alpha}}(k',M_m^{\alpha}[s]).$$
For the right-hand side, the augmentation $\epsilon$ induces a weak
equivalence
$$\vert [s] \mapsto X[s,t] \vert \xto{\sim}
B(Hk',H(k'\ltimes I_m^{\alpha}),HM_m^{\alpha})[t]$$
which, by~\cite[Prop.~VII.3.6]{goerssjardine}, induces a weak
equivalence
$$\vert [t] \mapsto \vert [s] \mapsto X[s,t] \vert \vert \xto{\sim} 
\vert [t] \mapsto B(Hk',H(k'\ltimes I_m^{\alpha}),HM_m^{\alpha})[t] \vert.$$
We conclude from~\cite[Thm.~2.1]{elmendorfkrizmandellmay} that
$$\pi_q(\vert [t] \mapsto \vert [s] \mapsto X[s,t] \vert \vert)
= \Tor_q^{k' \ltimes I_m^{\alpha}}(k',M_m^{\alpha}).$$
Hence, we have a spectral sequence
$$E_{s,t}^1 = \Tor_t^{k' \ltimes I_m^{\alpha}}(k',M_m^{\alpha}[s])
\Rightarrow \Tor_{s+t}^{k' \ltimes I_m^{\alpha}}(k',M_m^{\alpha})$$
which proves hypothesis~(i). To prove hypothesis~(ii), let
$f \colon \{P_m\} \to \{M_m\}$ be a special homomorphism of left
$\{ I_m \}$-modules. Then $f^{\alpha} \colon \{ P_m^{\alpha} \} \to
\{ M_m^{\alpha} \}$ is a special homomorphism of left
$\{I_m^{\alpha}\}$-modules, and hence, Lemma~\ref{tormapvanishing}
shows  that, for $q \geqslant 0$, the induced map
$f_* \colon F_q(\{ P_m \}) \to F_q(\{ M_m \})$ is zero. This completes
the proof.
\end{proof}

\begin{prop}\label{proexcision}Let $I$ be a two-sided ideal in a free
unital associative $\Fp$-algebra. Then, for all positive integers $q$,
the following pro-abelian group is zero:
$$\{ \Tor_q^{\Z \ltimes (I^m)}(\Z,\Z) \}.$$
\end{prop}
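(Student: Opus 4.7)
The plan is to reduce to characteristic $p$ and then show pro-exactness of the bar complex directly, exploiting the freeness of the ambient algebra. Applying Proposition~\ref{changeofrings} to the projection $\alpha \colon \Z \to \Fp$, it suffices to prove that $\{\Tor_q^{\Fp \ltimes I^m}(\Fp,\Fp)\}$ vanishes as a pro-$\Fp$-module for every $q > 0$. Computing these groups as the homology of the non-unital bar complex $B_\bullet^m$ with $B_q^m = (I^m)^{\otimes_\Fp q}$ and the standard Hochschild differential, the task reduces to pro-exactness of this complex in positive degrees. The case $q = 1$ is immediate: $H_1(B_\bullet^m) = I^m / I^{2m}$, and the transition map $I^{m'}/I^{2m'} \to I^m/I^{2m}$ is zero whenever $m' \geqslant 2m$.

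For $q \geqslant 2$, the key input is the freeness of the ambient algebra $R$. Writing $R = T_\Fp(V)$ for some $\Fp$-vector space $V$, the algebra $R$ admits the two-term $R$-bimodule resolution
\[
0 \to R \otimes_\Fp V \otimes_\Fp R \to R \otimes_\Fp R \xrightarrow{\mu} R \to 0,
\]
where the left map sends $a \otimes v \otimes b$ to $av \otimes b - a \otimes vb$. This makes $R$ quasi-free, so the non-unital bar complex $B_\bullet(J)$ of the augmentation ideal $J$ of $R$ is acyclic in degrees $\geqslant 2$. The plan is to combine this acyclicity with the multiplicative identity $I^a \cdot I^b = I^{a+b}$ to construct, for each $m$ and $q \geqslant 2$, an integer $m' = m'(m,q,I)$ such that every cycle in $B_q^{m'}$ becomes a boundary under the inclusion into $B_\bullet^m$. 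Concretely, a cycle in $B_q^{m'}$ is in particular a cycle in $B_q(J)$ and hence a boundary $dw$ for some $w \in B_{q+1}(J)$; one then iteratively corrects $w$ into an element of $B_{q+1}^m$ by rewriting the syzygy between $w$ and a candidate element of $B_{q+1}^m$ using the bimodule resolution above, absorbing the error into deeper powers of $I$ via $I^a \cdot I^b = I^{a+b}$.

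The main obstacle will be controlling this iterative correction so that it terminates in a number of steps depending only on $q$, and not on $m'$. Each correction improves one tensor slot but may temporarily worsen its neighbors, so the bookkeeping must track a multi-index of filtration levels across the $q+1$ tensor factors simultaneously. The expectation is that, because the bimodule resolution of $R$ has length two and the bar complex has tensor length $q+1$, the process closes up after $O(q)$ steps. The exactness of the bimodule resolution then guarantees that the obstructions at each step vanish in $R$ itself, while the pro-system $\{I^m\}$ converts these ordinary-ring relations into the desired pro-vanishing.
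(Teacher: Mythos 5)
The reduction to $\Fp$-coefficients via Prop.~\ref{changeofrings} and the identification of the $\Tor$-groups with the homology of the non-unital bar complex both match the paper. After that the two arguments diverge, and your proposal has a genuine gap.

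Your plan for $q \geqslant 2$ is not a proof: you explicitly flag the ``iterative correction'' as an unresolved obstacle, and indeed the bookkeeping you describe --- tracking a multi-index of $I$-adic filtration levels across $q+1$ tensor slots while corrections ``temporarily worsen neighbors'' --- is precisely where such arguments typically fail to terminate with a bound independent of $m'$. You have not exhibited such a bound, nor a mechanism guaranteeing it. Moreover the acyclicity of $B_\bullet(J)$ for the augmentation ideal $J$ of $R = T_{\Fp}(V)$ gives a bounding chain only in $J^{\otimes(q+1)}$, with no a priori relation to the $I$-adic filtration on the tensor factors; the entire content of the proposition is to control that relation, and quasi-freeness of $R$ alone does not do it.

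The paper's proof uses a different and much cleaner structural input: by Cohn's theorem, any one-sided ideal of a free associative algebra $F$ is a \emph{free} left $F$-module. Hence the multiplication $\mu \colon F \otimes I^m \to I^m$ admits an $F$-linear section $\alpha \colon I^m \to F \otimes I^m$, and $F$-linearity forces $\alpha(I^{2m}) \subset I^m \otimes I^m$. One then writes down the explicit chain homotopy
$$s(x_1 \otimes \cdots \otimes x_n) = (-1)^n\, x_1 \otimes \cdots \otimes x_{n-1} \otimes \alpha(x_n),$$
mapping $(I^{2m})^{\otimes n}$ to $(I^m)^{\otimes(n+1)}$, and a direct computation shows $ds + sd = \iota$ in positive degrees. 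This proves in one stroke, for every $q > 0$, that the inclusion $B_\bullet^{2m} \hookrightarrow B_\bullet^{m}$ is null-homotopic above degree $0$, hence that each transition map $\Tor_q^{\Fp \ltimes I^{2m}}(\Fp,\Fp) \to \Tor_q^{\Fp \ltimes I^{m}}(\Fp,\Fp)$ is zero. No iteration or filtration bookkeeping is needed; the ``depth-two'' jump from $m$ to $2m$ is fixed once and for all, independently of $q$. The idea you are missing is that freeness of $I$ as a one-sided module --- not merely quasi-freeness of the ambient algebra --- is what produces a splitting compatible with the $I$-adic filtration.
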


\begin{proof}By Prop.~\ref{changeofrings}, it suffices to show that,
for all $q > 0$, the pro-abelian group
$$\{ \Tor_q^{\Fp \ltimes (I^m)}(\Fp,\Fp) \}$$
is zero. Since $\Fp$ is a field, the $\Tor$-groups in question are
canonically isomorphic to the homology groups of the normalized
bar-complex
$$B(\Fp, \Fp \ltimes I^m, \Fp) = 
\Fp \otimes_{\Fp \ltimes I^m} B(\Fp \ltimes I^m,\Fp)$$
which takes the form
$$\cdots \xto{d_m} (I^m)^{\otimes n} \xto{d_m} \cdots \xto{d_m}
(I^m)^{\otimes 2} \xto{d_m} I^m \xto{d_m} \Fp$$
with differential
$$d_m(x_1 \otimes \cdots \otimes x_n) = \sum_{i=1}^{n-1} 
(-1)^i x_1 \otimes \cdots \otimes x_ix_{i+1} \otimes \cdots \otimes
x_n.$$ 
We define a chain-homotopy from the chain map
$$\iota \colon B(\Fp, \Fp \ltimes I^{2m}, \Fp) \to
B(\Fp, \Fp \ltimes I^m, \Fp)$$
induced by the canonical inclusion to the chain map that is zero in
positive degrees and the identity map in degree $0$. Replacing $I$ by
$I^m$, we may assume that $m = 1$. Suppose that $I$ is a two-sided
ideal in the free unital associative $\Fp$-algebra $F$. Then, as a
left $F$-module, $I$ is free, and hence,
projective~\cite[Cor.~4.3]{cohn}. It follows that the multiplication  
$\mu \colon F \otimes I \to I$ has an $F$-linear section $\alpha
\colon I \to F \otimes I$. We remark that $\alpha$ restricts to an
$I$-linear map $\alpha \colon I^2 \to I \otimes I$. Then
$$s(x_1 \otimes \cdots \otimes x_n) = (-1)^n x_1 \otimes \cdots
\otimes x_{n-1} \otimes \alpha(x_n)$$
is the desired chain-homotopy. We conclude that, for $q > 0$, the
canonical inclusion of $I^{2m}$ into $I^m$ induces the zero map
$$\Tor_q^{\Fp \ltimes I^{2m}}(\Fp,\Fp) \to 
\Tor_q^{\Fp \ltimes I^m}(\Fp,\Fp).$$
This completes the proof.
\end{proof}

It was proved by Goodwillie~\cite[Lemma~I.2.2]{goodwillie} that the
relative $K$-theory of a nilpotent extension of simplicial rings may
be evaluated degreewise. We need the following analogous result for
bi-relative $K$-theory.

\begin{lemma}\label{resolution}Let $f \colon A \to B$ be a map of
unital associative rings and $I \subset A$ a two-sided ideal such that
$f \colon I \to f(I)$ is an isomorphism onto a two-sided ideal of
$B$. Let $f[-] \colon A[-] \to B[-]$ be a map of simplicial unital
associative rings and $I[-]$ a simplicial two-sided ideal of $A[-]$
such that $f[-] \colon I[-] \to f(I[-])$ is an isomorphism onto a
simplicial two-sided ideal of $B[-]$. In addition, let  $\epsilon_A 
\colon A[-] \to A$ and $\epsilon_B \colon B[-] \to B$ be weak
equivalences of simplicial unital associative rings such that 
$f \circ \epsilon_A = \epsilon_B \circ f[-]$ and such that $\epsilon_A$
restricts to a weak equivalence of simplicial associative rings
$\epsilon_I \colon I[-] \to I$. Then there is a natural spectral
sequence
$$E_{s,t}^1 = K_t(A[s],B[s],I[s]) \Rightarrow K_{s+t}(A,B,I).$$
\end{lemma}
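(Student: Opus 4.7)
The plan is to assemble the degreewise bi-relative $K$-theory into a simplicial symmetric spectrum $K(A[-], B[-], I[-])$, defined in degree $s$ as the mapping fiber of the map $K(A[s], I[s]) \to K(B[s], f[s](I[s]))$. This is functorial in $[s] \in \Delta^{\mathrm{op}}$ because each simplicial structure map of $A[-]$ restricts to a simplicial structure map of $I[-]$ and, by hypothesis, each $f[s]$ restricts to an isomorphism $I[s] \xrightarrow{\sim} f[s](I[s])$. The standard skeletal filtration of the geometric realization of a simplicial symmetric spectrum then furnishes a strongly convergent spectral sequence
$$E_{s,t}^1 = K_t(A[s], B[s], I[s]) \Longrightarrow \pi_{s+t}\bigl|K(A[-], B[-], I[-])\bigr|.$$

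It remains to identify the abutment with $K_{s+t}(A, B, I)$, and for this I would invoke Goodwillie's Lemma~I.2.2 of~\cite{goodwillie} twice. Applied to the weak equivalence $\epsilon_A \colon A[-] \to A$ of simplicial associative rings together with its simplicial two-sided ideal, which by hypothesis admits a weak equivalence $\epsilon_I \colon I[-] \to I$, it shows that the canonical map $|K(A[-], I[-])| \to K(A, I)$ is a weak equivalence. Applied analogously to $\epsilon_B \colon B[-] \to B$ with simplicial two-sided ideal $f(I[-]) \to f(I)$—which is a weak equivalence because both $f \colon I \to f(I)$ and $f[s] \colon I[s] \to f[s](I[s])$ are isomorphisms, so that $f(I[-]) \cong I[-]$ and $f(I) \cong I$—it shows that $|K(B[-], f(I[-]))| \to K(B, f(I))$ is a weak equivalence. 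Taking mapping fibers of the resulting commutative square of spectra and using the compatibility of geometric realization with the degreewise mapping fiber, one obtains a weak equivalence $|K(A[-], B[-], I[-])| \xrightarrow{\sim} K(A, B, I)$, which combined with the spectral sequence above yields the lemma.

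The main obstacle is precisely the last point: the compatibility of geometric realization of simplicial symmetric spectra with mapping fibers. This compatibility requires that the simplicial spectra $K(A[-], I[-])$ and $K(B[-], f(I[-]))$ be objectwise fibrant and Reedy cofibrant, or at least proper in the sense of Bousfield–Friedlander. One arranges this by using the Waldhausen model of $K$-theory on the simplicial rings, followed by the functorial Bass completion recalled in Section~\ref{nonconnectivesection}, and by replacing the simplicial spectra by cofibrant-fibrant models before forming mapping fibers. Once this standard technical point is settled, the degreewise equivalence of Goodwillie passes through the mapping fiber construction and the spectral sequence of the lemma follows.
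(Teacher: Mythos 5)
Your plan to identify the abutment by applying Goodwillie's Lemma~I.2.2 twice, once to the pair $(A[-],I[-])\to(A,I)$ and once to $(B[-],f(I[-]))\to(B,f(I))$, does not work, because that lemma concerns the relative $K$-theory of a \emph{nilpotent} extension of simplicial rings, and the nilpotence hypothesis is essential to it. In the bi-relative setting of Lemma~\ref{resolution} there is no nilpotence assumption on $I$; it is an arbitrary two-sided ideal, as it must be to cover the Milnor squares that motivate the whole paper. For a non-nilpotent ideal, relative $K$-theory is not controlled by cyclic homology and does not commute with geometric realization of simplicial resolutions, so the maps $|K(A[-],I[-])|\to K(A,I)$ and $|K(B[-],f(I[-]))|\to K(B,f(I))$ you want to be weak equivalences are not weak equivalences in general. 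The argument therefore fails before one reaches the fibrancy and Reedy-cofibrancy technicalities you flag at the end; the paper explicitly presents Lemma~\ref{resolution} as an \emph{analogue} of Goodwillie's lemma precisely because the latter does not apply.

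What the paper does instead is avoid the relative groups entirely and work directly with the bi-relative groups. After setting up the skeleton-filtration spectral sequence, it invokes Waldhausen's extension of $K$-theory to simplicial rings, which preserves weak equivalences, to obtain a weak equivalence $\epsilon_*\colon K(A[-],B[-],I[-])\to K(A,B,I)$, and then compares Waldhausen's $K(A[-],B[-],I[-])$ with the realization $|[n]\mapsto K(A[n],B[n],I[n])|$ via a bi-simplicial ring construction, yielding a natural comparison map $f_K$. That $f_K$ is an isomorphism is established by the arithmetic square: rationally, bi-relative $K$ is identified with bi-relative $\HC^-$ and $\HC$ (Goodwillie's rational theorem, plus the excision theorems of Corti\~nas and Cuntz--Quillen for cyclic homology), and the cyclic theories manifestly commute with realization; with $\Z/p\Z$-coefficients, bi-relative $K$ is identified with bi-relative $\{\TC^n\}$ by~\cite[Thm.~1]{gh4}, and one reduces inductively through $\TR^n$ to $\THH$, which again commutes with realization. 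The key point, and what your proposal misses, is that bi-relative $K$-theory enjoys a degreewise-evaluation property not because the relative theories on each side do, but because bi-relative $K$ agrees with bi-relative cyclic and topological cyclic homology, which do.
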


\begin{proof}We have a spectral sequence
$$E_{s,t}^1 = K_t(A[s],B[s],I[s]) \Rightarrow \pi_{s+t}(\vert [n]
\mapsto K(A[n],B[n],I[n]) \vert)$$
obtained from the skeleton filtration of the geometric realization.
Hence, it suffices to construct a weak equivalence of the
symmetric spectrum $K(A,B,I)$ and the
symmetric spectrum $\vert [n] \mapsto K(A[n],B[n],I[n]) \vert$. The
definition of $K$-theory was extended to simplicial rings by
Waldhausen~\cite{waldhausen1}. Moreover, by op.~cit., Prop.~1.1, the 
extended functor preserves weak equivalence such that, in the case at
hand, the augmentations induce a weak equivalence 
$$\epsilon_* \colon K(A[-],B[-],I[-]) \xto{\sim} K(A,B,I).$$
We proceed to relate the left-hand side to the symmetric spectrum
given by geometric realization above. Let $\Delta[n][-]$ be the
simplicial standard $n$-simplex whose set of $m$-simplices
$\Delta[n][m]$ is the set non-decreasing maps from $[m]$ to
$[n]$. Then, if $S[-]$ is a simplicial ring, we define $S[-,-]$ to be
the bi-simplicial ring
$$S[m,n] = \Hom(\Delta[m][-] \times \Delta[n][-],S[-])$$
where the right-hand side is the set of maps of simplicial sets. 
For every $n$, the canonical projection gives rise to a map of
simplicial rings $\pr_1^* \colon S[-] \to S[-,n]$ which is a weak
equivalence. It induces a map of symmetric spectra
$$\pr_1^* \colon K(A[-],B[-],I[-]) \to
\vert [n] \mapsto K(A[-,n],B[-,n],I[-,n]) \vert$$
which is a weak equivalence by~\cite[Prop.~1.1]{waldhausen1}
and~\cite[Prop.~VII.3.6]{goerssjardine}. Similarly, we have the map of
simplicial rings $\pr_2^* \colon S[n] \to S[-,n]$, where the ring
$S[n]$ is considered as a constant simplicial ring, which induces a
map of symmetric spectra 
$$\pr_2^* \colon \vert [n] \mapsto K(A[n],B[n],I[n]) \vert \to
\vert [n] \mapsto K(A[-,n],B[-,n],I[-,n]) \vert.$$
The composition of the map of homotopy groups induced by $\pr_2^*$,
the inverse of the map of homotopy groups induced by the weak
equivalence $\pr_1^*$, and the map of homotopy groups induced by the
weak equivalence $\epsilon_*$ defines a natural map
$$f_K \colon \pi_q(\vert [n] \mapsto K(A[n],B[n],I[n]) \vert) \to
K_q(A,B,I)$$
and the lemma will follow, if we prove that this map is an
isomorphism. 

By the arithmetic square~\cite[Prop.~2.9]{bousfield}, it will suffice
to show the map $f$ induces an isomorphism of rational homotopy
groups and, for every prime number $p$, an isomorphism of homotopy
groups with $\Z/p\Z$-coefficients. For the rational homotopy groups,
we compare the bi-relative $K$-groups to the corresponding bi-relative
negative cyclic and cyclic homology
groups. By~\cite[Sect.~I.3]{goodwillie}, the latter are also defined
for simplicial rings. Hence, we may follow the proceedure above and
define maps $f_{\HC^{-}}$ and $f_{\HC}$ that makes the following
diagram commute:
$$\xymatrix{
{ \pi_q(|[k] \mapsto K(A[k],B[k],I[k])|)\otimes \Q } 
\ar[r]^(.59){f_K} \ar[d] &
{ K_q(A,B,I) \otimes \Q } \ar[d] \cr
{ \pi_q(|[k] \mapsto \HC^{-}(A[k] \otimes \Q, B[k] \otimes \Q, I[k]
\otimes \Q)|) } \ar[r]^(.61){f_{\HC^{-}}} &
{ \HC_q^{-}(A \otimes \Q, B \otimes \Q, I \otimes \Q) } \cr
{ \pi_{q-1}(|[k] \mapsto \HC(A[k],B[k],I[k])|) \otimes \Q}
\ar[r]^(.59){f_{\HC}} \ar[u] & 
{ \HC_{q-1}(A,B,I) \otimes \Q, } \ar[u] \cr
}$$
By theorems of Corti\~{n}as~\cite[Thm.~0.1]{cortinas} and
Cuntz-Quillen~\cite{cuntzquillen}, the vertical maps are
isomorphisms. Moreover, it follows immediately from the definition of
cyclic homology that the lower horizontal map is an
isomorphism. Finally, for the homotopy groups with
$\Z/p\Z$-coefficients, we compare the bi-relative $K$-groups to the
corresponding bi-relative topological cyclic homology groups. The
latter are defined by simplicial rings, and hence, we obtain the
following commutative diagram:
$$\xymatrix{
{ \pi_q(|[k] \mapsto K(A[k],B[k],I[k])|;\Z/p\Z) } 
\ar[r]^(.6){f_K} \ar[d] &
{ K_q(A,B,I;\Z/p\Z) } \ar[d] \cr
{ \{ \pi_q(|[k] \mapsto \TC^n(A[k],B[k],I[k];p)|;\Z/p\Z) \} } 
\ar[r]^(.6){f_{\TC}} &
{ \{ \TC_q^n(A,B,I;p,\Z/p\Z) \} } \cr
}$$
It follows from our previous result~\cite[Thm.~1]{gh4} that the
vertical maps are isomorphisms of pro-abelian groups. We must show
that the lower horizontal map is an isomorphism of pro-abelian
groups. We prove the stronger statement that the map of integral
homotopy groups
$$f_{\TC} \colon \pi_q(\vert [k] \mapsto \TC^n(A[k],B[k],I[k];p) \vert
\to \TC_q^n(A,B,I;p)$$
is an isomorphism, for all integers $q$ and $n \geqslant 1$. The
cofibration sequence
$$\TC^n(-;p) \to \TR^n(-;p) \to \TR^{n-1}(-;p) \to \Sigma \TC^n(-;p)$$
shows that it will suffice to show that the map
$$f_{\TR} \colon \pi_q(\vert [k] \mapsto \TR^n(A[k],B[k],I[k];p) \vert
\to \TR_q^n(A,B,I;p)$$
is an isomorphism, for all integers $q$ and $n \geqslant 1$. Moreover,
the cofibration sequence
$$\mathbb{H}_{\boldsymbol{\cdot}}(C_{p^{n-1}},T(-)) \to
\TR^n(-;p) \to \TR^{n-1}(-;p) \to
\Sigma \mathbb{H}_{\boldsymbol{\cdot}}(C_{p^{n-1}},T(-))$$
from~\cite[Thm.~2.2]{hm1} shows that it will suffice to show that
$f_{\TR}$ is an isomorphism, for $n = 1$ and for all $q$. Indeed,
the functor $\mathbb{H}_{\boldsymbol{\cdot}}(C_{p^{n-1}},-)$ commutes
with geometric realization and preserves weak equivalences. Finally,
the functor $\TR^1(-;p)$ is the topological Hochschild homology
functor $\THH(-)$, and we wish to show that
$$f_{\THH} \colon \pi_q(\vert [k] \mapsto \THH(A[k],B[k],I[k]) \vert
\to \THH_q(A,B,I)$$
is an isomorphism, for all integers $q$. This follows immediately from
the definition of topological Hochschild homology. 
\end{proof}

\begin{proof}[Proof of Thm.~\ref{birelative}]We first assume that
$(A,B,I)$ is a triple of $\Fp$-algebras and that $I$ admits an
embedding as a two-sided ideal in a free unital associative
$\Fp$-algebra. In this case, Prop.~\ref{proexcision} shows that the
pro-abelian group
$$\{ \Tor_q^{\Z \ltimes (I^m)}(\Z,\Z) \}$$
is zero, for every $q > 0$, and hence, Thm.~\ref{suslintheorem} shows
that the canonical map
$$K_q(A,B,I) \to \{ K_q(A/I^m, B/I^m, I/I^m) \}$$
is an isomorphism of pro-abelian groups. In particular, the group
$K_q(A,B,I)$ is a direct summand in $K_q(A/I^m,B/I^m,I/I^m)$, for $m$
large. Now, by the definition of the bi-relative groups, we have a
long-exact sequence
$$\cdots \to K_q(A/I^m,B/I^m,I/I^m) \to K_q(A/I^m,I/I^m) \to
K_q(B/I^m,I/I^m) \to \cdots$$
By Thm.~\ref{relative}, the middle and right-hand groups are
$p$-primary torsion groups of bounded exponent, and hence, the same
holds for the left-hand group. This shows that $K_q(A,B,I)$ is a
$p$-primary torsion group of bounded exponent as stated.

In the general case, we let $N$ be a positive integer such that $p^N$
annihilates $A$ and choose a diagram of simplicial unital associative
$\Z/p^N\Z$-algebras
$$\xymatrix{
{ B[-] } \ar[r] \ar[d]^{\epsilon_B} &
{ B/I[-] } \ar[d]^{\epsilon_{B/I}} &
{ A/I[-] } \ar[l]_{\bar{f}[-]} \ar[d]^{\epsilon_{A/I}} \cr
{ B } \ar[r] &
{ B/I } &
{ A/I, } \ar[l]_{\bar{f}} \cr
}$$
such that the vertical maps are weak equivalences and such that the
simplicial algebras in the top row are degree-wise free unital
associative $\Z/p^N\Z$-algebras. We then consider the diagram of
simplicial $\Z/p^N\Z$-algebras
$$\xymatrix{
{ I[-] } \ar[r] \ar@{=}[d] &
{ A[-] } \ar[r] \ar[d]^{f[-]} &
{ A/I[-] } \ar[d]^{\bar{f}[-]} \cr
{ I[-] } \ar[r] &
{ B[-] } \ar[r] &
{ B/I[-] } \cr
}$$
where the right-hand square is a pull-back square, and where the upper
and lower left-hand horizontal maps are the kernels of the upper and
lower right-hand horizontal maps. The maps $\epsilon_B$,
$\epsilon_{B/I}$, and $\epsilon_{A/I}$ above induce maps of simplicial
associative $\Z/p^N\Z$-algebras $\epsilon_A \colon A[-] \to A$ and
$\epsilon_I \colon I[-] \to I$ which are weak equivalences. It follows
from Lemma~\ref{resolution} that there is a spectral sequence
$$E_{s,t}^1 = K_t(A[s],B[s],I[s]) \Rightarrow K_{s+t}(A,B,I).$$
Hence, it suffices to show that $K_q(A[s],B[s],I[s])$ is a $p$-primary
torsion group of bounded exponent, for all integers $q$ and
$s \geqslant 0$. Since $B/I[s]$ is a free unital associative
$\Z/p^N\Z$-algebra, it follows that the canonical projections of
$A[s]$ onto $A/I[s]$ and $B[s]$ onto $B/I[s]$ have ring
sections. Therefore, the diagram
$$\xymatrix{
{ I[s]/pI[s] } \ar[r] \ar@{=}[d] &
{ A[s]/pA[s] } \ar[r] \ar[d] &
{ A/I[s]/pA/I[s] } \ar[d] \cr
{ I[s]/pI[s] } \ar[r] &
{ B[s]/pB[s] } \ar[r] &
{ B/I[s]/pB/I[s] } \cr
}$$
is a diagram of associative $\Fp$-algebras such that the right-hand
square is a pull-back square and such that the upper and lower
left-hand horizontal maps are the kernels of the upper and lower
right-hand horizontal maps. Now, since  $B[s]/pB[s]$ is a
free unital associative $\Fp$-algebra, it follows from the special
case considered first that $K_q(A[s]/pA[s],B[s]/pB[s],I[s]/pI[s])$ is
a $p$-primary torsion group of bounded exponent, for all integers $q$
and $s \geqslant 0$. Moreover, the mapping fiber of the map
$$K(A[s],B[s],I[s]) \to K(A[s]/pA[s],B[s]/pB[s],I[s]/pI[s])$$
induced by the canonical projections is canonically isomorphic to the
iterated mapping fiber of the square of relative $K$-theory spectra
$$\xymatrix{
{ K(A[s],pA[s]) } \ar[r] \ar[d] &
{ K(A/I[s],pA/I[s]) } \ar[d] \cr
{ K(B[s],pB[s]) } \ar[r] &
{ K(B/I[s],pB/I[s]). } \cr
}$$
It follows from Thm.~\ref{relative} above that the homotopy groups of
this iterated mapping fiber are $p$-primary torsion groups of bounded
exponent. Hence, $K_q(A[s],B[s],I[s])$ is a $p$-primary torsion group
of bounded exponent, for all integers $q$ and $s \geqslant 0$. We
conclude that, for every integer $q$, the group $K_q(A,B,I)$ is a
$p$-primary torsion group of bounded exponent as stated.
\end{proof}

\begin{proof}[Proof of Thm.~\ref{birelativetheorem}]We consider the
following diagram of pro-abelian groups:
$$\xymatrix{
{ K_q(A,B,I) } \ar[r] \ar[d] &
{ \{ K_q(A,B,I;\Z/p^v\Z) \} } \ar[d] \cr
{ \{ \TC_q^n(A,B,I;p) \} } \ar[r] &
{ \{ \TC_q^n(A,B,I;p,\Z/p^v\Z) \} } \cr
}$$
It follows from Thm.~\ref{birelative} and
Lemma~\ref{proabeliangrouplemma} that the 
upper horizontal map is an isomorphism. Similarly,
Lemma~\ref{proabeliangrouplemma} shows that the lower horizontal map
is an isomorphism. Finally, we proved in~\cite[Thm.~1]{gh4} that the
right-hand vertical map is an isomorphism. Hence, the left-hand
vertical map is an isomorphism as stated. 
\end{proof}

\providecommand{\bysame}{\leavevmode\hbox to3em{\hrulefill}\thinspace}
\providecommand{\MR}{\relax\ifhmode\unskip\space\fi MR }
\providecommand{\MRhref}[2]{%
  \href{http://www.ams.org/mathscinet-getitem?mr=#1}{#2}
}
\providecommand{\href}[2]{#2}


\begin{thebibliography}{10}

\bibitem{bass}
H.~Bass, \emph{Algebraic {$K$}-theory}, W. A. Benjamin, Inc., New
  York-Amsterdam, 1968.

\bibitem{bokstedthsiangmadsen}
M.~B\"okstedt, W.-C. Hsiang, and I.~Madsen, \emph{The cyclotomic trace and
  algebraic {$K$}-theory of spaces}, Invent. Math. \textbf{111} (1993),
  465--540.

\bibitem{bousfield}
A.~K. Bousfield, \emph{The localization of spectra with respect to homology},
  Topology \textbf{18} (1979), 257--281.

\bibitem{cohn}
P.~M. Cohn, \emph{Free rings and their relations. {S}econd edition}, London
  Mathematical Society Monographs, vol.~19, Academic Press, Inc., London, 1985.

\bibitem{cortinas}
G.~Corti\~nas, \emph{The obstruction to excision in {$K$}-theory and in cyclic
  homology}, Invent. Math. \textbf{164} (2006), 143--173.

\bibitem{cuntzquillen}
J.~Cuntz and D.~Quillen, \emph{Excision in bivariant periodic cyclic
  cohomology}, Invent. Math. \textbf{127} (1997), 67--98.

\bibitem{dundasmccarthy}
B.~I. Dundas and R.~McCarthy, \emph{Topological {H}ochschild homology of ring
  functors and exact categories}, J. Pure Appl. Alg. \textbf{109} (1996),
  231--294.

\bibitem{elmendorfkrizmandellmay}
A.~D. Elemendorf, I.~Kriz, M.~A. Mandell, and J.~P. May, \emph{Ring, modules,
  and algebras in stable homotopy theory. {W}ith an appedix by {M. Cole}},
  Mathematical Surveys and Monographs, vol.~47, Amer. Math. Soc., Providence,
  RI, 1997.

\bibitem{gh}
T.~Geisser and L.~Hesselholt, \emph{Topological cyclic homology of
  schemes}, {$K$}-theory (Seattle, 1997), Proc. Symp. Pure Math.,
vol.~67, 1999, pp.~41--87. 

\bibitem{gh4}
\bysame, \emph{Bi-relative algebraic {$K$}-theory and topological cyclic
  homology}, Invent. Math. \textbf{166} (2006), 359--395.

\bibitem{gh3}
\bysame, \emph{On the {$K$}-theory and topological cyclic homology of smooth
  schemes over a discrete valuation ring}, Trans. Amer. Math. Soc \textbf{358}
  (2006), 131--145.

\bibitem{gh6}
\bysame, \emph{On the vanishing of negative {$K$}-groups},
  arXiv:0811.0652.

\bibitem{goerssjardine}
P.~G. Goerss and J.~F. Jardine, \emph{Simplicial homotopy theory}, Progress in
  Mathematics, vol. 174, Birkh\"{a}user, Boston, MA, 1999.

\bibitem{goodwillie}
T.~G. Goodwillie, \emph{Relative algebraic {$K$}-theory and cyclic homology},
  Ann. of Math. \textbf{124} (1986), 347--402.

\bibitem{h1}
L.~Hesselholt, \emph{Stable topological cyclic homology is topological
  {H}ochschild homology}, {$K$}-theory (Strasbourg, 1992), Asterisque,
  vol. 226, 1994, pp.~175--192.

\bibitem{h3}
\bysame, \emph{{$K$}-theory of truncated polynomial algebras}, Handbook of
  {$K$}-theory, Springer-Verlag, New York, 2005.

\bibitem{h4}
\bysame, \emph{On the {$K$}-theory of the coordinate axes in the plane}, Nagoya
  Math. J. \textbf{185} (2007), 93--109.

\bibitem{h7}
\bysame, \emph{On the {W}hitehead spectrum of the circle},
  arXiv:0710.2823.

\bibitem{hm1}
L.~Hesselholt and I.~Madsen, \emph{Cyclic polytopes and the {$K$}-theory of
  truncated polynomial algebras}, Invent. Math. \textbf{130} (1997), 73--97.

\bibitem{hm}
\bysame, \emph{On the {$K$}-theory of finite algebras over {W}itt vectors of
  perfect fields}, Topology \textbf{36} (1997), 29--102.

\bibitem{hm3}
\bysame, \emph{On the de~{R}ham-{W}itt complex in mixed characteristic}, Ann.
  Sci. {\'E}cole Norm. Sup. \textbf{37} (2004), 1--43.

\bibitem{mccarthy1}
R.~McCarthy, \emph{Relative algebraic {$K$}-theory and topological cyclic
  homology}, Acta Math. \textbf{179} (1997), 197--222.

\bibitem{suslin4}
A.~A. Suslin, \emph{Excision in the integral algebraic {$K$}-theory}, Proc.
  Steklov Inst. Math. \textbf{208} (1995), 255--279.

\bibitem{thomasontrobaugh}
R.~W. Thomason and T.~Trobaugh, \emph{Higher algebraic {$K$}-theory of schemes
  and of derived categories}, Grothendieck Festschrift, Volume III, Progress in
  Mathematics, vol.~88, 1990, pp.~247--435.

\bibitem{waldhausen1}
F.~Waldhausen, \emph{Algebraic {$K$}-theory of topological spaces. {I}},
  Algebraic and geometric topology, Prop. Symp. Pure Math., vol.~32, Amer.
  Math. Soc., Providence, RI, 1978, pp.~35--60.

\bibitem{weibel}
C.~A. Weibel, \emph{Mayer-{V}ietoris sequences and module structure on
  {$NK_*$}}, Algebraic {$K$}-theory (Evanston, IL, 1980), Lecture Notes in
  Math., vol. 854, Springer-Verlag, New York, 1981.

\end{thebibliography}
\end{document}